\g@addto@macro{\UrlBreaks}{\UrlOrds}
\definecolor{darkred}{rgb}{0.4,0,0}
\definecolor{darkgreen}{rgb}{0,0.5,0}
\definecolor{darkblue}{rgb}{0,0,0.4}
\newtheoremstyle{mytheoremstyle}
    {5pt}	                
    {5pt}                    	
    {\itshape}                  
    {}                          
    {\bfseries}                 
    {.}                         
    {.5em}                      
    {}  			
\newtheoremstyle{mydefinitionstyle}
    {5pt}	                
    {5pt}                    	
    {}                  	
    {}                          
    {\bfseries}                 
    {.}                         
    {.5em}                      
    {}  			
\theoremstyle{mytheoremstyle}
\newtheorem{theorem}{Theorem}[subsection]
\numberwithin{equation}{subsection}
\let\c@theorem \c@equation
    \let\c@subsubsection\c@equation
  \let\c@figure\c@equation		
\newaliascnt{conjecture}{theorem}
\newaliascnt{lemma}{theorem}
\newtheorem{lemma}[lemma]{Lemma}
\newaliascnt{proposition}{theorem}
\newtheorem{proposition}[proposition]{Proposition}
\newaliascnt{corollary}{theorem}
\newtheorem{corollary}[corollary]{Corollary}
\theoremstyle{mydefinitionstyle}
\newaliascnt{exercise}{theorem}
\newaliascnt{definition}{theorem}
\newtheorem{definition}[definition]{Definition}
\newaliascnt{remark}{theorem}
\newtheorem{remark}[remark]{Remark}
\newaliascnt{example}{theorem}
\newaliascnt{question}{theorem}
\def\subsek~{\S{}}
\def\equationautorefname~#1\null{%
  Eqn.~(#1)\null
}
\@date \else {\vskip3ex \centering\footnotesize\@date\par\vskip1ex}\fi
\else \@footnotetext{\@setdate}\fi}
\title[Kummer rigidity for K3 surface automorphisms]{Kummer rigidity for K3 surface automorphisms via Ricci-flat metrics}
\date{August 2018}
\thanks{Revised \textsc{\today}}
\author{Simion Filip}
\address{
\parbox{0.5\textwidth}{
	Department of Mathematics\\
	University of Chicago\\
	5734 S University Ave\\
  Chicago IL, 60637}
}
\email{{sfilip@math.uchicago.edu}}
\author{Valentino Tosatti}
\address{
\parbox{0.5\textwidth}{
	Department of Mathematics\\
	Northwestern University\\
	2033 Sheridan Road,
	Evanston, IL 60208}
}
\email{tosatti@math.northwestern.edu}
\newcommand{\isom}{\cong}
\newcommand \toisom {\tilde{\longrightarrow}}
\newcommand{\Kahler}{{{K}\"ahler}\xspace}
\newcommand{\dVol}{\operatorname{dVol}}
\newcommand{\norm}[1]{\left\|#1\right\|}
\newcommand{\ov}[1]{\overline{#1}}
\newcommand \cW {\mathscr{W}}
\newcommand \bC {\mathbb{C}}
\newcommand \bH {\mathbb{H}}
\newcommand \bR {\mathbb{R}}
\newcommand{\id}{\mathbf{1}}
\DeclareMathOperator{\SL}{{SL}}
\DeclareMathOperator{\SU}{{SU}}
\DeclareMathOperator{\GL}{{GL}}
\DeclareMathOperator{\dist}{{dist}}
\newcommand{\del}{\partial}
\newcommand{\de}{\partial}
\newcommand{\dbar}{\overline{\del}}
\newcommand{\ddbar}{i\del\dbar}
\newcommand{\ve}{\varepsilon}
\newcommand{\vp}{\varphi}
\newcommand{\ti}[1]{\tilde{#1}}
\renewcommand{\leq}{\leqslant}
\renewcommand{\geq}{\geqslant}
\begin{document}

\begin{abstract}
We give an alternative proof of a result of Cantat \& Dupont, showing that any automorphism of a K3 surface with measure of maximal entropy in the Lebesgue class must be a Kummer example.
Our method exploits the existence of Ricci-flat metrics on K3s and also covers the non-projective case.
\end{abstract}

\maketitle


\section{Introduction}
  \label{sec:introduction}

A basic result of Yomdin \cite{Yo}, known previously as the Shub Entropy Conjecture, says that the topological entropy of any smooth map of a compact manifold is bounded below by the spectral radius of the action on homology.
Gromov \cite{Gro} showed that in fact for compact \Kahler manifolds and holomorphic automorphisms, this lower bound is always achieved.
Thus entropy can be computed from linear-algebraic data, and when it is positive the measure of maximal entropy is unique on compact \Kahler surfaces by \cite[Thm.~1.2]{deThelinDinh_DynamicsKahler}.
For an introduction to complex dynamics in higher dimensions see \cite{FS}, \cite{Si}, or \cite{Cantat_SurveyMilnor}.

\emph{Kummer examples} are tori with automorphisms that become affine on the universal cover, together with their modifications using basic operations of birational geometry.
A characteristic feature of Kummer examples is that the measure of maximal entropy is in the Lebesgue class.
In \cite{CD}, Cantat-Dupont proved that automorphisms with positive topological entropy of {\em projective} surfaces with measure of maximal entropy in the Lebesgue class are Kummer examples.

The main goal of this article is to give an alternative proof in the case of K3 surfaces, which in addition covers the non-projective case.

\subsection{Main statements}
\begin{theorem}\label{main}
Let $X$ be a $K3$ surface, $T\colon X\to X$ an automorphism with positive topological entropy whose measure of maximal entropy is absolutely continuous with respect to the Lebesgue measure.
Then $X$ is a Kummer K3 and $T$ is induced by an (affine) automorphism of the corresponding torus.
\end{theorem}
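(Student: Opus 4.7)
Gromov-Yomdin identifies the topological entropy with $\log\lambda$ where $\lambda>1$ is the spectral radius of $T^*$ on $H^{1,1}(X,\bR)$. The Hodge index theorem together with invariance of the nef cone gives real eigenclasses $[\eta^\pm]$ with $T^*[\eta^\pm]=\lambda^{\pm1}[\eta^\pm]$, both nef, with $[\eta^\pm]^2=0$ and $[\eta^+]\cdot[\eta^-]>0$. Averaging $\lambda^{\mp n}(T^{\pm n})^*\omega_0$ for a reference K\"ahler form produces the Green currents $\eta^\pm$: positive closed $(1,1)$-currents representing $[\eta^\pm]$ and satisfying $T^*\eta^\pm=\lambda^{\pm1}\eta^\pm$. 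The unique measure of maximal entropy is $\mu=\eta^+\wedge\eta^-$, and if $T^*\Omega=\zeta\Omega$ on the holomorphic symplectic form, invariance and finiteness of $\mu$ together with absolute continuity force $|\zeta|=1$.

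\textbf{Ricci-flat interpolation.} The main idea — and what makes the argument work in the non-projective case — is to upgrade $\eta^\pm$ from currents to smooth semipositive $(1,1)$-forms using Yau's theorem, which requires no algebraicity. For each $s>0$, the K\"ahler class $s[\eta^+]+s^{-1}[\eta^-]$ has positive self-intersection, so it carries a unique Ricci-flat representative $\omega_s$, and uniqueness forces $T^*\omega_s=\omega_{\lambda s}$, yielding a one-parameter $T$-equivariant family. As $s\to\infty$ and $s\to 0$, results on collapsing Calabi-Yau metrics imply that $s^{-1}\omega_s\to\eta^+$ and $s\omega_s\to\eta^-$ weakly. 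Expanding the Monge-Amp\`ere identity $\omega_s^2=c\,\Omega\wedge\ov\Omega$ in $s$, the $s$-independent cross term is $2\eta^+\wedge\eta^-=2\mu$, so the hypothesis $\mu\ll\Omega\wedge\ov\Omega$ translates directly into a Monge-Amp\`ere statement about the degenerate limits.

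\textbf{Regularity via Pesin theory.} The main technical step, and the main obstacle, is to conclude from the $L^1$ density information for $\mu$ that $\eta^\pm$ are in fact smooth. Pesin theory for $(T,\mu)$ yields Lyapunov exponents $\pm\log\lambda$ with one-complex-dimensional holomorphic stable and unstable distributions that should match $\ker\eta^\mp$. Bootstrapping along generic $T$-orbits one contracts/expands by $\lambda^{\pm n}$, uses the dynamical equation $T^*\eta^\pm=\lambda^{\pm1}\eta^\pm$, and invokes Ko{\l}odziej-type $L^\infty$ bounds for Monge-Amp\`ere equations to improve the regularity of $\eta^\pm$ from currents to continuous, then smooth, forms on a $T$-invariant open full-measure set; analyticity of Ricci-flat K\"ahler metrics together with Hartogs-type extension for closed semipositive currents extends smoothness across the remaining proper analytic locus. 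This is the step I expect to be most delicate, since $[\eta^\pm]$ are irrational boundary nef classes for which no general smoothness theorem is available a priori — smoothness must be extracted from the dynamical/Lebesgue hypothesis alone.

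\textbf{Kummer conclusion.} Once $\eta^\pm$ are smooth semipositive with $(\eta^\pm)^2=0$ and $\eta^+\wedge\eta^-$ a smooth nonvanishing multiple of $\Omega\wedge\ov\Omega$, the sum $\omega=\eta^++\eta^-$ is a smooth Ricci-flat K\"ahler metric with an orthogonal splitting $T^{1,0}X=\ker\eta^+\oplus\ker\eta^-$ by two transverse holomorphic line foliations, and the holonomy of $\omega$ reduces to a torus; by Ambrose-Singer, $\omega$ is flat. A globally flat K\"ahler structure on a K3 is impossible ($c_2(X)=24\neq 0$), so this flat structure can only live on the complement of a finite set and must arise by descent from a genuine flat K\"ahler metric on a complex torus $A$ via a branched double cover along the fixed points of the $\pm1$-action. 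Reversing this construction exhibits $X$ as the minimal desingularization of $A/\{\pm 1\}$ and shows that $T$ is induced by an affine automorphism of $A$, completing the proof.
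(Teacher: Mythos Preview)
Your \textbf{regularity step} is where the proposal breaks down, and it is not a matter of delicacy --- the sketched mechanism does not work. The eigencurrents satisfy the \emph{degenerate} equation $(\eta^\pm)^2=0$; Ko{\l}odziej-type estimates concern $\omega_\phi^n = f\,\omega^n$ with $f$ in some $L^p$ and a genuine K\"ahler reference, and give nothing here. There is no known ``bootstrapping along orbits'' that upgrades a positive closed current with continuous potential and a dynamical eigenvalue relation to a smooth form, and indeed on a Kummer K3 the sum $\eta^++\eta^-$ is \emph{not} K\"ahler on $X$: it vanishes on the sixteen $(-2)$-curves, so the conclusion you are aiming for (a smooth Ricci-flat K\"ahler metric on $X$ with reduced holonomy) is false as stated. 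Two ancillary points: the ``expansion in $s$'' of $\omega_s^2$ is not legitimate, since $\omega_s$ is the Yau solution and depends on $s$ in a highly nonlinear way; and the class $s[\eta^+]+s^{-1}[\eta^-]$ need not be K\"ahler on $X$ at all, so the metrics $\omega_s$ may only exist on an orbifold contraction $Y$ of $X$.

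The paper never attempts to prove smoothness of $\eta^\pm$. Its route is indirect and avoids this obstacle entirely. Writing $\mu=\tfrac{1}{|S|}\dVol\big|_S$ by ergodicity, it (i) first treats the special case $S=X$: the Ricci-flat metrics $\omega_{Y,t}$ on the orbifold $Y$, combined with the equality case of Jensen's inequality applied to $\int_Y \omega_{Y}\wedge\omega_{Y,Nh}$, force the pointwise expansion factor to be exactly $Nh/2$ and produce two transverse real-analytic holomorphic foliations, after which Cantat/Cantat--Favre finishes; (ii) shows that if $\nu_*\mu$ is \emph{uniformly} hyperbolic on $Y$ then a quantitative Hopf argument gives $S=X$; (iii) establishes uniform hyperbolicity by proving the expansion/contraction factors $\rho^{u/s}$ are cohomologous to $\pm h/2$ via coboundaries $\alpha_{u/s}$, and that these coboundaries are \emph{bounded} --- the boundedness being forced because otherwise a Brody reparametrization of unstable parametrizations would produce an entire curve in $Y$ on which $\nu_*(\eta^++\eta^-)$ vanishes, contradicting Dinh--Sibony. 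None of these steps requires, or yields, smoothness of $\eta^\pm$.
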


This resolves a conjecture of Cantat \cite[p.162]{Cantat_thesis} and McMullen \cite[Conjecture 3.31]{McM2}, including the case of non-projective $K3$ surfaces.
Combining \cite{CD} with our main theorem we easily obtain the following generalization:

\begin{corollary}\label{coro}
Let $X$ be a compact complex surface, $T\colon X\to X$ an automorphism with positive topological entropy whose measure of maximal entropy is absolutely continuous with respect to the Lebesgue measure. Then $(X,T)$ is a Kummer example.
\end{corollary}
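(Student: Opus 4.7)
The plan is to combine Cantat's classification of compact complex surfaces admitting positive-entropy automorphisms with \autoref{main} and with the projective case \cite{CD}. First, by a structure theorem of Cantat (from his thesis and subsequent work), any such surface $X$ is either rational or has Kodaira dimension zero with minimal model a K3, an Enriques surface, or a complex $2$-torus; moreover, in the non-rational case $T$ descends to an automorphism $T_0$ of the unique minimal model $X_0$. The birational morphism $X\to X_0$ is an isomorphism outside a proper analytic subset, so the measure of maximal entropy for $T$ pushes forward to the (unique) measure of maximal entropy for $T_0$ and remains absolutely continuous with respect to Lebesgue.

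I would then dispatch each case of the classification. Rational surfaces and Enriques surfaces are automatically projective, and any compact complex surface bimeromorphic to a projective surface is itself projective (Moishezon equals projective in dimension two); hence in the rational and Enriques cases, \cite{CD} directly yields that $(X_0,T_0)$ is a Kummer example. In the complex torus case, every holomorphic self-map is affine in the universal cover---it is the composition of a group endomorphism with a translation, as one sees by lifting to $\bC^2$ and applying Liouville to the (bounded, $\Lambda$-invariant) partial derivatives of the lift---so $(X_0,T_0)$ is tautologically a torus example, and therefore a Kummer example. The projective K3 case is covered by \cite{CD}, and the non-projective K3 case is exactly the content of \autoref{main}. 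Since Kummer examples are by definition closed under birational modifications, the fact that $(X_0,T_0)$ is Kummer implies the same for $(X,T)$.

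The crux of the argument is the non-projective K3 case, which is inaccessible to the methods of \cite{CD} (those arguments use projective-algebraic tools and positivity properties of currents that are unavailable in the non-algebraic setting), and is precisely what \autoref{main} supplies. Once \autoref{main} is granted, the corollary reduces to invoking Cantat's classification and \cite{CD} in the remaining (projective) cases, with the torus case handled by the automatic affineness of holomorphic torus automorphisms.
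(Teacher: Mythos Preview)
Your proposal is correct and follows essentially the same route as the paper's proof: invoke Cantat's classification, apply \cite{CD} in the projective cases, handle the torus case by affineness, and use \autoref{main} for the (possibly non-projective) K3 case, with the blowup case reduced to the minimal model. The paper is a bit more explicit on two points you gloss over: that the topological entropy of $T_0$ equals that of $T$ (so that one is really comparing measures of \emph{maximal} entropy), and that the identification of the measures goes through the pullback of the eigencurrents, $\mu=\pi^*\mu_{X_0}$, which makes the absolute continuity of $\mu_{X_0}$ transparent.
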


As another application, our main result implies that the measure of maximal entropy in McMullen's construction of a Siegel disc \cite{McM} (which are never projective) cannot be in the Lebesgue class.
Whether the complement of the Siegel disc can have positive Lebesgue measure remains an interesting open question.

For rational maps of $\mathbb P^1$ an analogous result was established by Zdunik \cite{Zd}, and for general endomorphisms of $\mathbb P^n$ by Berteloot--Dupont \cite{BD} and Berteloot--Loeb \cite{BL}, with the role of Kummer examples now played by Latt\`es maps.
For related results in the case of general endomorphisms of K\"ahler manifolds see \cite{Ca3}.
For an introduction to K3 surfaces, including their Ricci-flat metrics, see \cite{K3sem}.


\subsection{Proof Outline}
	\label{ssec:proof_outline}

Let $\mu$ be the measure of maximal entropy and $\dVol$ the normalized volume form induced by the holomorphic $2$-form on the K3 surface.
The assumption says that $\mu=f\dVol$ for some $f\in L^1(\dVol)$.

Because $\dVol$ is invariant under any holomorphic automorphism and since $\mu$ is ergodic (in fact mixing by \cite[Thm. 6.1]{Cantat_K3}), it follows that $f$ is the normalized indicator function of a set of positive Lebesgue measure and hence we can assume

\begin{align}
  \label{eqn:mu_S_dVol}
    \mu = \frac{1}{\dVol(S)}\dVol\vert_S
\end{align}
for some $T$-invariant set $S$ of positive Lebesgue measure.
The proof is then naturally divided into three separate steps:
\begin{itemize}
	\item \emph{Step 1:} We prove the result in the special case $\mu = \dVol$ (i.e. $S=X$ above).
	\item \emph{Step 2:} We prove the result when $\mu$ is uniformly hyperbolic and in the Lebesgue class, by reducing to the previous case.
	\item \emph{Step 3:} We prove that $\mu$ is uniformly hyperbolic, assuming that it is in the Lebesgue class.
\end{itemize}

Step 1 is handled in \autoref{sec:ergodic_case} using the Ricci-flat \Kahler metric provided by Yau's theorem \cite{Ya}.
This key input, combined with Jensen's inequality, allows for an elementary proof.
It already covers the case when the stable/unstable eigencurrents are smooth (since the indicator function of the set $S$ has to be smooth, hence $S=X$), which was conjectured by Cantat \cite[p.162]{Cantat_thesis}.
The two key properties that are used throughout are that the metric is \Kahler (so certain integrals are computed cohomologically) and that the metric induces the same volume form as the holomorphic $2$-form.
Each of these conditions can be easily ensured individually, but their simultaneous validity is Yau's theorem.

Step 2 is handled in \autoref{sec:the_uniformly_hyperbolic_case} and is quite general: it would apply to smooth volume-preserving complex surface diffeomorphisms which are uniformly hyperbolic on an ergodic component of Lebesgue measure.

Finally, Step 3 is handled in \autoref{gen}.
We show that the expansion/con\-trac\-tion coefficients of the dynamics are in fact cohomologous to a constant (in the dynamical sense) and then show that the coboundaries are, in fact, uniformly bounded.
We use that $\mu=\eta_+\wedge \eta_-$ for two positive closed currents $\eta_\pm$ called the stable/unstable eigencurrents.
The coboundary structure follows from comparing the conditional measures of $\mu$ on the stable/unstable foliations and the restriction of $\eta_\pm$ to these foliations.
To prove the $L^\infty$ bound for the coboundaries, we first show that the restriction of the stable/unstable eigencurrent to a stable/unstable manifold is flat.
This is then used to derive a contradiction if the uniform bound does not hold by constructing a Brody curve (in an orbifold $K3$ surface which is obtained by contracting some $(-2)$-curves in $X$) which intersects trivially both the stable and unstable eigencurrents.
Such an entire curve is known not to exist by work of Dinh--Sibony \cite{DS}.
This idea is also a key input in the work of Cantat--Dupont \cite{CD}.

\autoref{sec:alternative_arguments_for_the_existence_of_the_coboundaries} contains an alternative derivation of the coboundary structure, based again on Ricci-flat metrics, some estimates of Birkhoff sums, and ideas along the lines of the Gottschalk--Hedlund theorem.
These arguments give {\em a priori} exponential integrability of the coboundaries, and it is possible that one could prove ergodicity of $\dVol$ using this weaker property, rather than the $L^\infty$ bound used in \autoref{sec:the_uniformly_hyperbolic_case}.

\medskip

{\bf \noindent Orbifolds.}
Because the automorphism potentially (in the Kummer case, always) has periodic curves, these must be contracted by a map $\nu\colon X\to Y$ and many arguments happen on $Y$ instead of $X$.
In the K3 case $Y$ has only orbifold singularities and the needed properties are explained in \autoref{ssec:preliminaries_on_currents_and_orbifolds}.
On a first reading, one can assume that $X=Y$ and skip \autoref{contract}.

\medskip

{\bf \noindent Comparison to the approach of Cantat--Dupont.}
Our use of Ricci-flat metrics on K3 surfaces allows for several simplifications compared to the approach in \cite{CD}.
A key step in showing that the K3 is Kummer is based on the existence of expanded/contracted foliations with the required smoothness properties.
In our case, this follows directly from the equality case in Jensen's inequality.
Moreover, the Ricci-flat metric already provides the flat metric on the torus (see \autoref{ssec:an_alternative_argument}) and leads to an alternative proof that we have a Kummer example.

To establish ergodicity (Step 2), we use a standard tool in dynamics, the Hopf argument (in a quantitative form).
\cite{CD} is based on a topological approach, showing that the set where the foliations are defined is both open and closed.
In our case, we construct the expansion/contraction factors $\alpha_u,\alpha_s$ and then establish their uniform boundedness.
We establish their existence using results of De Th\'elin and Dinh \cite{deThelinDinh_DynamicsKahler}, whereas similar objects are constructed in \cite{CD} using the $\lambda$-lemma, holonomy maps, and renormalization along stable/unstable manifolds.

Finally, the use of Dinh--Sibony's \autoref{entire} and the construction of a Brody curve on which the stable/unstable currents vanish is common to both proofs.
We use it to establish uniform hyperbolicity of the measure of maximal entropy, while \cite{CD} use it to obtain a compactness property of the family of stable/unstable manifolds.
The affine structure on these manifolds, a standard fact in dynamics, also appears in both proofs.
The use of the contraction $\nu\colon X\to Y$ is unavoidable in both proofs.

\medskip

{\bf \noindent Acknowledgments.}
We would like to thank Alex Eskin, Carlos Matheus, Federico Ro\-dri\-guez-Hertz, and Amie Wilkinson for discussions, Mattias Jonsson for useful comments on an earlier draft, and Serge Cantat, Curt McMullen and the referee for extensive feedback that significantly improved the exposition.
This research was partially conducted during the period the first-named author served as a Clay Research Fellow, and during the second-named author's visits to the Center for Mathematical Sciences and Applications at Harvard University and to the Institut Henri Poincar\'e in Paris (supported by a Chaire Poincar\'e funded by the Clay Mathematics Institute) which he would like to thank for the hospitality and support. The first-named author gratefully acknowledges support from the Institute for Advanced Study.
This material is based upon work supported by the National Science Foundation under Grants No. DMS-1638352, DMS-1610278 and DMS-1903147.



\section{Preliminaries}\label{prel}
In this section we collect some preliminaries discussions, mostly well-known, and fix notation for the rest of the paper.
On a first reading, the construction of the orbifold $Y$ in \autoref{contract} and its Ricci-flat metrics can be skipped and one can assume that $X=Y$.
An introduction to dynamics on surfaces can be found in the survey \cite{Cantat_SurveyMilnor}.


\subsection{Preliminaries on currents and orbifolds}
  \label{ssec:preliminaries_on_currents_and_orbifolds}

\subsubsection{The eigencurents}
  \label{sssec:the_eigencurents}
For a detailed discussion of the results quoted in this section, see \cite{Cantat_K3}.

Let $T$ be an automorphism with positive topological entropy $h>0$ of a complex $K3$ surface $X$. Then there exist two closed positive $(1,1)$-currents $\eta_\pm$ on $X$ which satisfy
\begin{equation}
T^*\eta_{\pm}=e^{\pm h}\eta_{\pm}
\end{equation}
and have continuous local potentials, in fact H\"older continuous by Dinh--Sibony \cite{DS}.
The currents yield cohomology classes $[\eta_\pm]$ and we normalize them to have $\int_X [\eta_+]\wedge[\eta_-]=1$.
These classes are nef, i.e. are limits of K\"ahler cohomology classes, and have vanishing self-intersection $\int_X [\eta_{\pm}]^2=0$.

We will be particularly interested in the cohomology $(1,1)$-class $[\eta_+]+[\eta_-]$ which is also clearly nef and satisfies $\int_X([\eta_+]+[\eta_-])^2=2\int_X[\eta_+]\wedge[\eta_-]=2$.

\subsubsection{The picture in \texorpdfstring{$H^{1,1}$}{H11}}
  \label{sssec:the_picture_in_h_1_1}
The intersection pairing on $H^{1,1}(X,\bR)$ has signature $(1,19)$ and the class $[\eta_+]+[\eta_-]$ sits on one of the two hyperboloids of classes with square $2$.
The hyperboloid is naturally identified with hyperbolic $19$-space and contains a geodesic determined by intersecting with the two-dimensional real space spanned by $[\eta_+],[\eta_-]$.
Cup product has signature $(1,1)$ on this plane, and is negative definite on its orthogonal complement.
The pullback action of the automorphism $T$ acts as an isometry on the complement to the $2$-plane, and as a translation by $h$ along the geodesic that the plane determines.

The K\"ahler classes in $H^{1,1}(X,\bR)$ are those that pair positively against $(-2)$ curves (see \cite[XIII.2]{K3sem}) and the K\"ahler cone contains the geodesic in its closure.
Typically the geodesic is strictly in the interior and the orbifold construction below is unnecessary (this can be assumed on a first reading).
When the geodesic lies in the boundary, one has to contract the $(-2)$ curves that pair to zero against both $[\eta_+],[\eta_-]$ as in \autoref{contract} below.

For more on constructing automorphisms of K3 surfaces using this point of view, see \cite{McMullen_K3_entropy_glue}.

\subsubsection{The associated orbifold}
The following result is well-known, but for the reader's convenience we provide a proof.
\begin{proposition}\label{contract}
 Let $V\subset X$ be the union of all irreducible compact holomorphic curves $C\subset X$ which satisfy $\int_C([\eta_+]+[\eta_-])=0$.
 \begin{enumerate}
   \item[(i)] There are finitely many compact holomorphic curves periodic\footnote{A $T$-periodic curve is defined to satisfy $T(C)=C$ set-theoretically, but not necessarily pointwise.} under $T$, and $V$ is their union.
   \item[(ii)] There exists an orbifold $Y$ and a holomorphic map $\nu\colon X\to Y$, which is an isomorphism away from $V$ and contracts each connected component of $V$ to an orbifold point of $Y$.
   \item[(iii)] There exists a holomorphic automorphism $T_Y$ of $Y$ with the same topological entropy as $T$, such that $\nu\circ T=T_Y\circ\nu$.
 \end{enumerate}
\end{proposition}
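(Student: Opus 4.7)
The plan is to exploit the cohomological picture of \S\ref{sssec:the_picture_in_h_1_1}: the $2$-plane $P=\langle[\eta_+],[\eta_-]\rangle\subset H^{1,1}(X,\bR)$ has signature $(1,1)$ and its orthogonal complement $P^\perp$ is negative definite, while $T^*$ preserves both $P$ (scaling each generator by $e^{\pm h}$) and $P^\perp$ (as an isometry). For any irreducible curve $C\subset V$, nefness of $\eta_\pm$ combined with $(C\cdot\eta_+)+(C\cdot\eta_-)=0$ forces $C\cdot\eta_+=C\cdot\eta_-=0$, so $[C]\in P^\perp$. The Hodge index theorem then gives $C^2\leq 0$, and adjunction on the K3 (where $K_X=0$, so $C^2=2p_a(C)-2\geq -2$) forces $C^2=-2$ and $C\cong\mathbb{P}^1$. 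Since distinct irreducible curves on a K3 cannot share a class (their intersection would be both $\geq 0$ and equal to $-2$), and the lattice $P^\perp\cap H^2(X,\mathbb{Z})$ contains only finitely many vectors of square $-2$, the number of irreducible components of $V$ is finite.

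The characterization of $V$ as the union of periodic curves then drops out: $T^*$ permutes the finite set of classes just identified, so each $C\subset V$ is periodic; conversely, if $T^n(C)=C$ set-theoretically, then $[C]\cdot[\eta_+]=[T^n(C)]\cdot[\eta_+]=[C]\cdot T^{*n}[\eta_+]=e^{nh}[C]\cdot[\eta_+]$, and $h>0$ forces $C\cdot\eta_+=0$; symmetrically $C\cdot\eta_-=0$, so $C\subset V$. For (ii), each connected component of $V$ is a configuration of $(-2)$-curves whose intersection matrix is negative definite (inherited from $P^\perp$), so Grauert's contraction theorem produces an analytic map $\nu\colon X\to Y$ collapsing each such component to a normal point of $Y$. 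Because the contracted curves are $(-2)$-curves and $K_X=0$, the singularities thus produced are rational double points of ADE type, each isomorphic to $\bC^2/\Gamma$ for a finite $\Gamma\subset\SU(2)$, which endows $Y$ with an orbifold structure.

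Part (iii) then follows by descent. Since $T$ permutes the components of $V$, the universal property of the contraction yields a unique holomorphic $T_Y\colon Y\to Y$ with $\nu\circ T=T_Y\circ\nu$. The inequality $h_{\mathrm{top}}(T_Y)\leq h_{\mathrm{top}}(T)$ holds because $\nu$ is a continuous factor map, and the reverse inequality comes from Bowen's formula for entropy in an extension: the only non-singleton fibers of $\nu$ are the finitely many connected components of $V$, and a suitable iterate $T^k$ acts on each such tree of $\mathbb{P}^1$'s by biholomorphisms, which have zero topological entropy.

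The main obstacle is the finiteness step in (i): one must combine the Hodge index theorem, adjunction on the K3, and the rigidity of $(-2)$-curve classes into a single argument. Once this is in place, part (ii) reduces to a standard application of Grauert's criterion together with the ADE classification of rational double points, and part (iii) is routine equivariant bookkeeping around the contraction plus Bowen's formula.
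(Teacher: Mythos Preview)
Your argument is correct and follows the same broad outline as the paper: negative definiteness of $P^\perp$, Grauert's contraction criterion, adjunction to identify $(-2)$-curves, the ADE classification of the resulting singularities, and descent of $T$ through the contraction. The one substantive difference lies in the finiteness step of (i). The paper invokes \cite[Theorem 1.1]{CT} as a black box to conclude that $V$ has finitely many irreducible components, whereas you give a self-contained lattice argument: each irreducible $C\subset V$ is a $(-2)$-curve whose class lies in the negative definite space $P^\perp$, distinct $(-2)$-curves have distinct integral classes (else their intersection would be simultaneously $\geq 0$ and $-2$), and a discrete subset of a negative definite space meets any fixed-norm sphere in finitely many points. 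Your route is more elementary and tailored to the K3 case, where adjunction immediately pins down $C^2=-2$; the paper's citation applies in greater generality but hides the mechanism. Similarly, for the entropy equality in (iii) the paper simply cites \cite[Lemma 2.2]{Ka}, while you spell out the factor-map inequality and the Bowen fiber-entropy bound; both are standard and equivalent in effect.
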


From this result it follows that $X=Y$ if and only if $T$ has no periodic curves, or equivalently the class $[\eta_+]+[\eta_-]$ is K\"ahler on $X$.

\begin{proof}
For part (i), it is clear that $T$-periodic curves are contained in $V$, since $\int_C \eta_\pm = \int_{T^{{-N}}C}(T^N)^*\eta_\pm = e^{\pm Nh}\int_{T^{-N}C}\eta_\pm$ for any compact curve $C$ and integer $N$.
A similar calculation show that $V$ is $T$-invariant and the general result in \cite[Theorem 1.1]{CT} implies that $V$ is the union of finitely many curves, proving (i).

For part (ii), since the intersection form is negative definite on the complement of the span of $[\eta_+],[\eta_-]$, it follows that if we write $V=\cup_{i=1}^NC_i$ for the decomposition of $V$ into irreducible components, then the intersection matrix $(C_i\cdot C_j)$ is negative definite. By a theorem of Grauert \cite{Gr} there is then a contraction map $\nu\colon X\to Y$ onto an irreducible normal compact complex surface, which contracts each connected component of $V$ to a point.

Next we claim that each connected component of $V$ is in fact an $ADE$ curve (i.e. its irreducible components are smooth rational curves with selfintersection equal to $-2$ \cite[\S III.2]{BHPV}). Indeed, the adjunction formula gives
$$p_a(C_i)=1+\frac{(K_X\cdot C_i)+(C_i^2)}{2}\leq\frac{1}{2},$$
using of course that $K_X\cong\mathcal{O}_X,$
hence $p_a(C_i)=0$ and so each $C_i$ is a smooth rational curve with $(C_i^2)=-2$, as claimed.

The fact that $V$ is composed of $ADE$ curves now implies that  all singular points of $Y$ are rational double points, which in particular are orbifold points (locally isomorphic to the quotient $\mathbb{C}^2/\Gamma$ for certain finite subgroups $\Gamma\subset \SU(2)$ acting freely on the unit sphere, see \cite{Du} for more).

For part (iii), following e.g. \cite[Lemma 2.2]{Ka} since $T$ maps $V$ onto itself, it descends to an automorphism $T_Y:Y\to Y$ with $\nu\circ T=T_Y\circ\nu$, and the topological entropy of $T_Y$ equals the one of $T$, namely $h$.
\end{proof}

\subsubsection{The invariant measures}
  \label{sssec:the_invariant_measures}
Because the currents $\eta_\pm$ have continuous local potentials, their wedge product in the sense of Bedford-Taylor \cite{BT} is well-defined and gives a $T$-invariant probability measure $\mu:=\eta_+\wedge\eta_-$ on $X$.
It is the unique measure of maximal entropy.

On the other hand, the K3 surface $X$ carries a nowhere vanishing holomorphic $2$-form $\Omega$, which we normalize to have $\int_X\Omega\wedge\ov{\Omega}=1$.
The probability measure $\dVol:=\Omega\wedge\ov{\Omega}$ is automatically $T$-invariant, and will be referred to as the Lebesgue measure of $X$.
It will sometimes be denoted by $\dVol_X$.

Applying $\nu_*$ gives a $T_Y$-invariant volume form $\dVol_Y:=\nu_*(\dVol_X)$, which is nowhere vanishing in the orbifold sense.

\subsubsection{Orbifold Ricci-flat metrics}
  \label{sssec:orbifold_ricci_flat_metrics}
The orbifold $K3$ surface $Y$ constructed in \autoref{contract} admits orbifold K\"ahler metrics, as is well-known (see e.g. \cite[Ex. 3.2]{Cam}).
In fact, for every $t\in \bR$ there is an orbifold K\"ahler class $[\omega_{Y,t}]$ on $Y$ such that
\begin{align}
  \label{eqn:pullback_from_orbifold}
  e^{t}[\eta_+]+e^{-t}[\eta_-]=\nu^*[\omega_{Y,t}]
\end{align}
holds.
The (nontrivial) proof of this fact is given in \cite[Proof of Theorem 1.3]{FT}.
In our setting, Ricci-flatness of a (normalized) K\"ahler metric $\omega_Y$ is equivalent to $\omega_Y^2 = \dVol_Y$.

\begin{proposition}\label{prop:kahler}
With the notation as in \autoref{eqn:pullback_from_orbifold}, we have:
\begin{enumerate}
  \item[(i)] The K\"ahler class $[\omega_{Y,t}]$ on the orbifold $Y$ contains a unique orbifold Ricci-flat K\"ahler metric $\omega_{Y,t}$.
  \item [(ii)] The metrics satisfy $\omega_{Y,t+h}:=T_Y^*(\omega_{Y,t})$.
\end{enumerate}
\end{proposition}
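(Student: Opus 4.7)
The plan is to derive part~(i) from the orbifold version of Yau's theorem, and then obtain part~(ii) by pulling back $\omega_{Y,t}$ along $T_Y$ and invoking the uniqueness just established. The only non-routine ingredient is the orbifold Calabi--Yau theorem itself; the rest is a cohomological check.

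For part~(i), I would first observe that $Y$ is an orbifold Calabi--Yau surface. By \autoref{contract} its singularities are rational double points, locally of the form $\C^2/\Gamma$ with $\Gamma\subset \SU(2)$ finite, so the standard holomorphic $2$-form on $\C^2$ is $\Gamma$-invariant; globally this shows that the holomorphic symplectic form $\Omega$ on $X$ descends to a nowhere-vanishing holomorphic orbifold $2$-form on $Y$, or equivalently that the orbifold canonical bundle $K_Y$ is trivial. I would then invoke Yau's theorem in the orbifold setting (in the $V$-manifold formalism of Kobayashi, or any of its later extensions) to produce, in each orbifold K\"ahler class on $Y$, a unique Ricci-flat orbifold K\"ahler metric. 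Applied to the class $[\omega_{Y,t}]$ of \autoref{eqn:pullback_from_orbifold}, this yields the desired $\omega_{Y,t}$.

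For part~(ii), the starting point is that $T_Y^*\omega_{Y,t}$ is automatically an orbifold K\"ahler metric, since $T_Y$ is a biholomorphism of $Y$, and is Ricci-flat because the Ricci form is natural under biholomorphisms. The next step is to identify its cohomology class. Using the intertwining $\nu\circ T = T_Y\circ\nu$ from \autoref{contract}(iii), together with the eigenvalue relations $T^*\eta_\pm = e^{\pm h}\eta_\pm$, one computes
\[
\nu^*[T_Y^*\omega_{Y,t}] = T^*\nu^*[\omega_{Y,t}] = T^*(e^t[\eta_+]+e^{-t}[\eta_-]) = e^{t+h}[\eta_+]+e^{-(t+h)}[\eta_-] = \nu^*[\omega_{Y,t+h}].
\]
Since $\nu$ contracts only finitely many $(-2)$-curves to rational double points, $\nu^*$ is injective on $H^{1,1}$, so we conclude $[T_Y^*\omega_{Y,t}] = [\omega_{Y,t+h}]$; the uniqueness from part~(i) then forces the two Ricci-flat representatives to coincide, giving $T_Y^*\omega_{Y,t} = \omega_{Y,t+h}$. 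The only point where I foresee any subtlety is confirming that Yau's theorem applies cleanly at the rational double point singularities, but this is by now classical.
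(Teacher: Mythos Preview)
Your proof is correct and follows essentially the same route as the paper: part~(i) via the orbifold extension of Yau's theorem, and part~(ii) by the cohomological computation $\nu^*[T_Y^*\omega_{Y,t}]=\nu^*[\omega_{Y,t+h}]$ together with injectivity of $\nu^*$ and uniqueness from~(i). Your additional justification that $K_Y$ is trivial (via the $\SU(2)$-invariance of the volume form at the rational double points) is a helpful elaboration but not a departure from the paper's argument.
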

\begin{proof}
For (i), the proof of Yau's theorem \cite{Ya} extends to orbifolds (see e.g. \cite[Thm. 4.1]{Cam}), and so we conclude that the class $[\omega_{Y,t}]$ contains a unique orbifold Ricci-flat K\"ahler metric $\omega_{Y,t}$.

Part (ii) follows by uniqueness of the Ricci-flat metric in its cohomology class: both $\omega_{Y,t+h}$ and $T_Y^*(\omega_{Y,t})$ are Ricci-flat and we have
\[\begin{split}
\nu^*[T_Y^*(\omega_{Y,t})]&=T^*\nu^*[\omega_{Y,t}]=T^*(e^{t}[\eta_+]+e^{-t}[\eta_-])=e^{t+h}[\eta_+]+e^{-t-h}[\eta_-]\\
&=\nu^*[\omega_{Y,t+h}],
\end{split}\]
and the map $\nu^*$ is injective in cohomology.
\end{proof}



\subsection{General remarks about Lyapunov exponents}
	\label{ssec:general_remarks_about_lyapunov_exponents}
Let $Z$ denote either $X$ or $Y$, and $T_Z$ the corresponding automorphism.
Let $m$ be any $T_Z$-invariant ergodic probability measure.
Fix a smooth hermitian metric for computing all norms below and recall that the Lyapunov exponent of $m$ is defined by (see \cite[\S1.1]{Ledr}):
\[
	\lambda(m) := \lim_{N\to \infty} \frac{1}{N} I_N \text{ with } I_N:=\int_Z \log \norm{DT_Z^N}\, dm
\]
The limit exists since $I_n$ satisfies the subadditivity property $ I_{k+\ell}\leq I_k + I_\ell $:
\begin{align*}
	\int_Z \log \norm{D_xT^{k+\ell}}\, dm(x) & \leq \int_Z \log \left( \norm{D_xT^k}\cdot \norm{D_{T^k x}T^\ell} \right) dm(x)\\
	& = \int_Z \log \norm{D_xT^k} dm(x) + \int_Z \log \norm{D_{T^kx}T^\ell}dm(x)\\
	& = I_k + \int_Z \log \norm{D_xT^\ell}dm(T^{-k}x) = I_k + I_\ell
\end{align*}
where we have used the $T_Z$-invariance of $m$ and the inequality $\norm{A\cdot B}\leq \norm{A}\cdot \norm{B}$ for linear maps $A,B$.
In fact by Fekete's lemma $\lambda(m)=\inf_N \frac{1}{N}I_N$ and in particular
\begin{align}
	\label{eqn:fekete_lower_bound}
	\frac{1}{N}I_N\geq \lambda(m) \quad \forall N \geq 1.
\end{align}
The exponent does not depend on the fixed ambient metric, since any two will be uniformly comparable.

\subsubsection{Stable/Unstable directions}
	\label{sssec:stable_unstable_directions}
If $m$ is a $T_Z$-invariant ergodic probability measure with strictly positive Lyapunov exponent, then the Oseledets theorem (see e.g. \cite[Thm. 3.1]{Ledr}) implies that there exist measurable $DT_Z$-invariant complex line subbundles $W^\pm(x)$ of the tangent bundle of $Y$, defined for $m$-a.e. $x$, such that $\lim_{N\to \infty}\frac{1}{N}\log \norm{D_xT^N_Z\vert_{W^\pm(x)}} = \pm \lambda(m)$ for $m$-a.e. $x$ (the positive and negative exponents have the same absolute value because $T_Z$ is volume-preserving).
We will use alternatively the notation $W^s$ for $W^-$ and call it the stable direction, and $W^u$ for $W^+$ for the unstable.

\subsubsection{Absolute continuity}
  \label{sssec:absolute_continuity}
Suppose now that the measure of maximal entropy $\mu$ is absolutely continuous with respect to the Lebesgue measure.
Then Ledrappier--Young \cite[Corollary G]{LY2} implies that the Lyapunov exponent of $\mu$ is $\frac{h}{2}$, since the real dimension of the unstable subspace is $2$ and the entropy is $h$.\\


\section{The ergodic case}
  \label{sec:ergodic_case}

In this section we assume that in fact $\mu=\dVol$ and give an easy proof that $(X,T)$ is a Kummer example, using Ricci-flat metrics and the equality case of Jensen's inequality.

\subsection{Using Jensen's inequality}

For the orbifold Ricci-flat metrics on $Y$ constructed in \autoref{prop:kahler}, let $\omega_Y:=\omega_{Y,0}$ be the fixed reference metric.

\begin{definition}[Expansion factor]
 	\label{def:expansion_factor}
For $x\in Y$ let $\lambda(x,N)$ denote the expansion factor (or pointwise Lyapunov exponent) after $N$ iterates of the map $T_Y$.	
Namely, at a given point $x$ the metrics $\omega_Y$ and {$\omega_{Y,Nh}=(T_Y^N)^*\omega_{Y}$} are of the form, in an appropriate basis:
\begin{align}
	\label{eqn:omega_omega_N}
	\begin{split}
	\omega_Y(x) & = |dz_1|^2 + |dz_2|^2\\
	\omega_{Y,Nh}(x) & = |e^{\lambda(x,N)}dz_1|^2 + |e^{-\lambda(x,N)}dz_2|^2
	\end{split}
\end{align}
{where as is customary the K\"ahler forms are identified with their corresponding metrics.}
More intrinsically, we can define $\lambda(x,N)$ to be equal to $\frac{1}{2}\log$ of the largest eigenvalue of the hermitian form $\omega_{Y,Nh}(x)$ with respect to $\omega_Y(x)$.
 \end{definition}

\begin{remark}
  \label{rmk:distance_between_metrics}
  The next observation regarding expansion factors will be useful in the proof of \autoref{prop:exponent_foliations_ergodic_case} below (see also \autoref{ssec:yau_vs_oseledets_curve}).
  Suppose $V\isom \bC^2$ and is equipped with a volume form.
  Assume that $h_1,h_2$ are two hermitian metrics on $V$, inducing the background volume form.
  Then the expansion factor can be alternatively defined as
  \[
    \dist(h_1,h_2):= \max_{\norm{v}_{h_1}=1}\log \norm{v}_{h_2}
  \]
  Observe that the quantity is in fact symmetric (because of volume compatibility), i.e. $\dist(h_1,h_2)=\dist(h_2,h_1)$, and nonzero if $h_1\neq h_2$.
  It indeed defines a distance function, making the space of hermitian metrics isometric to hyperbolic $3$-space (see \autoref{sssec:hyperbolic_distances}).

  Observe now that if $h_1\neq h_2$ then the maximizing vector in the definition is unique up to scaling by $S^1\subset \bC$, i.e. it determines a unique complex line, call it the maximizing line between $h_1$ and $h_2$.
  Now suppose that $h_1,h_2,h_3$ are three distinct metrics that satisfy
  \[
    \dist(h_1,h_2) + \dist(h_2,h_3) = \dist(h_1,h_3)
  \]
  Then the maximizing line between the metrics is the same.
  Indeed, let $v'$ be a maximizing vector for $\dist(h_1,h_3)$ and observe that
  \begin{align*}
    \dist(h_1,h_2) + \dist(h_2,h_3) & \geq
    \log\left(\frac{\norm{v'}_{h_2}}{\norm{v'}_{h_1}}\right)
    +
    \log\left(\frac{\norm{v'}_{h_3}}{\norm{v'}_{h_2}}\right)
    \\
    & = \dist(h_1,h_3)
  \end{align*}
  The first inequality is an equality if and only if $v'$ is (up to scaling) maximizing for the pairs $h_1,h_2$ and $h_2,h_3$ so the conclusion follows.
\end{remark}

The following simple observation is the key which yields the main theorem in the case when $\mu=\dVol$.
Note that the assumption is on $X$, but the conclusions are on $Y$.
\begin{proposition}
\label{prop:exponent_foliations_ergodic_case}
Assume that $\mu=\dVol$ on $X$.
Then
\begin{enumerate}
	\item[(i)] We have $\lambda(x,N)=\frac{Nh}{2}$ for every $x\in Y$ and every $N$.
	\item[(ii)] There exist at every $x\in Y$ two orthogonal (for $\omega_Y$) tangent directions $W^\pm(x)$ such that $\log \norm{DT_Y\vert_{W^{\pm}(x)}} = \pm \frac{h}{2}$.
	The directions $W^\pm(x)$ vary real-analytically in $x$, are $T_Y$-invariant, and agree with the directions provided by the Oseledets theorem applied to $\mu$.
\end{enumerate}
\end{proposition}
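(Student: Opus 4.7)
The plan is to compute $\int_Y \omega_Y\wedge \omega_{Y,Nh}$ in two distinct ways and then apply Jensen's inequality together with the Fekete lower bound. Cohomologically, using the pullback relation $\nu^*[\omega_{Y,t}] = e^t[\eta_+] + e^{-t}[\eta_-]$, the vanishing $[\eta_\pm]^2=0$, and the normalization $\int_X[\eta_+]\wedge[\eta_-]=1$, the integral evaluates to $2\cosh(Nh)$. Pointwise, the normal form in \autoref{def:expansion_factor} gives $\omega_Y\wedge\omega_{Y,Nh}=\cosh(2\lambda(x,N))\,\omega_Y^2$, and Ricci-flatness together with the normalization $\int_Y\omega_Y^2=2$ and $\int_Y d\dVol_Y=1$ identifies $\omega_Y^2 = 2\,d\dVol_Y$. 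Combining gives
\[
\int_Y \cosh(2\lambda(x,N))\, d\dVol_Y(x) = \cosh(Nh).
\]

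Next I would apply Jensen's inequality to the strictly convex function $t\mapsto\cosh(2t)$. Since $\mu=\dVol$ on $X$ pushes forward to $\mu_Y=\dVol_Y$ on $Y$, and since $\log\|D_xT_Y^N\|=\lambda(x,N)$ when computed in the $\omega_Y$ metric, the quantity $I_N$ of \autoref{ssec:general_remarks_about_lyapunov_exponents} equals $\int_Y\lambda(\cdot,N)\,d\mu_Y$. The Fekete bound \eqref{eqn:fekete_lower_bound} together with the Ledrappier--Young identity $\lambda(\mu)=h/2$ in the absolutely continuous case yields $I_N\geq Nh/2$, so
\[
\cosh(Nh)=\int_Y\cosh(2\lambda)\,d\mu_Y \;\geq\; \cosh(2 I_N) \;\geq\; \cosh(Nh).
\]
All inequalities must therefore be equalities. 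Strict convexity forces $\lambda(\cdot,N)\equiv Nh/2$ on a set of full $\dVol_Y$-measure, and continuity of the orbifold Ricci-flat metrics (smooth on the regular locus, and with standard orbifold control at singular points via local covers $\bC^2\to\bC^2/\Gamma$) extends this to every $x\in Y$, proving (i).

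For (ii), the two eigendirections of $\omega_{Y,Nh}$ with respect to $\omega_Y$ yield a pair of $\omega_Y$-orthogonal complex lines at each $x$. To see they are independent of $N$ I would invoke \autoref{rmk:distance_between_metrics}: since $\dist(\omega_Y(x),\omega_{Y,Nh}(x))=\lambda(x,N)=Nh/2$, the triangle inequality saturates in the sense $\dist(\omega_Y,\omega_{Y,h})+\dist(\omega_{Y,h},\omega_{Y,2h})=\dist(\omega_Y,\omega_{Y,2h})$, which forces the maximizing line to be the same in each pair; induction gives a common unstable direction $W^+(x)$, and $W^-(x)$ is its $\omega_Y$-orthogonal. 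Real-analyticity follows from real-analyticity of Ricci-flat \Kahler metrics (Morrey-type elliptic regularity for the complex Monge--Amp\`ere equation with real-analytic coefficients). $T_Y$-invariance is immediate from \autoref{prop:kahler}(ii), and agreement with Oseledets is forced by uniqueness since $W^\pm$ realize the correct pointwise expansion rates $\pm h/2$.

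The main obstacle I anticipate is not conceptual but organizational: ensuring the continuity/real-analyticity extension across orbifold points is properly justified on local uniformizing covers, and carefully tracking the normalization $\omega_Y^2=2\,d\dVol_Y$. The conceptual heart of the argument is the Jensen--Fekete rigidity, once the cohomological computation has been set up using the Ricci-flat metrics.
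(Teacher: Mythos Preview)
Your proposal is correct and follows essentially the same approach as the paper: both combine the cohomological identity $\int_Y \omega_Y \wedge \omega_{Y,Nh} = 2\cosh(Nh)$ with Jensen's inequality and the Fekete lower bound $I_N \geq Nh/2$ to force $\lambda(\cdot,N) \equiv Nh/2$, then invoke \autoref{rmk:distance_between_metrics} (equality in the triangle inequality) and real-analyticity of the Ricci-flat metrics for part (ii). The only cosmetic difference is that the paper applies Jensen twice (first to $\log$, then to $t \mapsto \log(e^t + e^{-t})$) whereas you apply it once directly to the strictly convex function $t \mapsto \cosh(2t)$; the content is identical.
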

\begin{proof}
For part (i), the cohomological calculation $\int_Y \omega_Y\wedge \omega_{Y,Nh}=e^{Nh}+e^{-Nh}$ and Jensen's inequality give:
\begin{equation}
\label{jen1}
\begin{split}
\log\left( e^{Nh} + e^{-Nh} \right) &=  \log \left(\int_Y \omega_Y \wedge \omega_{Y,Nh}\right)\\
&\geq\int_Y\log\left(\frac{\omega_Y \wedge \omega_{Y,Nh}}{\dVol_Y}\right)\dVol_Y\\
&=\int_Y \log\left( e^{2\lambda(x,N) } + e^{-2\lambda(x,N)} \right) \dVol_Y.
\end{split}\end{equation}

We established in \autoref{eqn:fekete_lower_bound} that
$$\int_Y \lambda(x,N)\dVol_Y\geq \frac{Nh}{2}.$$
Indeed, in the case at hand $\log \norm{D_xT_Y^N} = \lambda(x,N)$ by definition, and the Lyapunov exponent is $h/2$ by \autoref{sssec:absolute_continuity}.

Note that the function $\log(e^x+e^{-x})$ is convex and increasing so we can apply Jensen again.
Recall that $2I_N := \int_Y 2\lambda(x,N)\dVol_Y\geq Nh$ to find:
\begin{equation}\label{jen2}
	\int_Y \log\left( e^{2\lambda(x,N) } + e^{-2\lambda(x,N)} \right) \dVol_Y \geq \log(e^{2I_N} + e^{-{2I_N}}) \geq \log(e^{Nh}+ e^{-Nh})
\end{equation}
So from \autoref{jen1} and \autoref{jen2} it follows that we must have equality pointwise a.e., that is $\lambda(x,N)=Nh/2$ pointwise a.e., for all $N$.
Since the function $\lambda(-,N)$ is continuous, the result holds everywhere on $Y$.

For part (ii), the equality case in Jensen plus the equality case in $I_k+I_\ell\leq I_{k+\ell}$ (see \autoref{ssec:general_remarks_about_lyapunov_exponents}) imply that at every point $x\in Y$, the directions $dz_1,dz_2$ appearing in \autoref{eqn:omega_omega_N} are independent of $N$ and determine the spaces $W^{\pm}(x)$.
Indeed, we can apply \autoref{rmk:distance_between_metrics} since we have equality in the triangle inequality and hence the direction of maximal expansion at each point is independent of the iterate of the dynamics.
The same applies to the inverse dynamics, giving the two directions.
The directions are orthogonal for $\omega_Y$ by the spectral theorem for hermitian matrices.
Moreover, for any real-analytic \Kahler metrics $\alpha,\beta$, the direction of maximal expansion of $\alpha$ relative to $\beta$ varies real-analytically, away from the locus where the direction is not unique (empty in our case).
\end{proof}

We can now finish off the proof of \autoref{main} when $\mu=\dVol$.

\begin{theorem}\label{main2}
Let $X$ be a $K3$ surface, $T\colon X\to X$ an automorphism with positive topological entropy whose measure of maximal entropy $\mu$ equals Lebesgue measure $\dVol$.
Then $X$ is a Kummer K3 and $T$ is induced by an (affine) automorphism of the corresponding torus.
\end{theorem}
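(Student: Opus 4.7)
The plan is to leverage the strong consequences of \autoref{prop:exponent_foliations_ergodic_case}---the real-analytic, $T_Y$-invariant, $\omega_Y$-orthogonal splitting $TY = W^+\oplus W^-$ with constant pointwise expansion factors $\pm h/2$---to establish that the Ricci-flat metric $\omega_Y$ is in fact flat, and then to invoke the orbifold Bieberbach structure theorem to deduce that $Y$ is a finite quotient of a complex torus on which $T_Y$ lifts to an affine map. This will give the Kummer conclusion for $(X,T)$.

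First I would define real-analytic, nonnegative, rank-one $(1,1)$-forms $\omega^{\pm}$ on $Y$ by restricting $\omega_Y$ to the complex line sub-bundles $W^{\pm}$, so that $\omega_Y = \omega^+ + \omega^-$. In the basis of \autoref{eqn:omega_omega_N} the pointwise identity $\omega_{Y,Nh} = (T_Y^N)^*\omega_Y = e^{Nh}\omega^+ + e^{-Nh}\omega^-$ is immediate. Since $\omega_{Y,Nh}$ is K\"ahler by \autoref{prop:kahler} (Yau's theorem), differentiating this identity and letting $N\to\infty$ forces $d\omega^+ = 0$ and symmetrically $d\omega^- = 0$. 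Comparing cohomology classes of $e^{-Nh}\omega_{Y,Nh}$ in the limit shows $[\omega^{\pm}] = [\eta_{\pm}]$; combining this with the eigenvalue relation $T_Y^*\omega^{\pm} = e^{\pm h}\omega^{\pm}$ and the analogous property for the (pushforward) eigencurrents $\eta_{Y,\pm}$ gives $\omega^{\pm} - \eta_{Y,\pm} = \ddbar u^{\pm}$ with $u^{\pm}\circ T_Y - e^{\pm h} u^{\pm}$ pluriharmonic, hence a constant $c^{\pm}$; a standard iteration argument using boundedness of $u^{\pm}$ on the compact $Y$ forces $u^{\pm}$ itself to be constant, so $\omega^{\pm} = \eta_{Y,\pm}$. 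In particular the eigencurrents on $Y$ are real-analytic rank-one $(1,1)$-forms with $\omega_Y = \eta_{Y,+}+\eta_{Y,-}$ and $\eta_{Y,+}\wedge\eta_{Y,-} = \dVol_Y$.

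Next I would extract flat local coordinates. Because $\omega^{\pm}$ are closed positive real-analytic rank-one $(1,1)$-forms, their kernels define holomorphic foliations $\mathcal{F}^{\pm}$; in local holomorphic coordinates $(z_1,z_2)$ adapted to these foliations, closure forces
\[
\omega^+ = \tfrac{i}{2}A(z_1,\bar z_1)\,dz_1\wedge d\bar z_1, \qquad \omega^- = \tfrac{i}{2}B(z_2,\bar z_2)\,dz_2\wedge d\bar z_2
\]
for positive real-analytic $A,B$. Writing $\Omega = f(z_1,z_2)\,dz_1\wedge dz_2$ for the nowhere-vanishing holomorphic $2$-form and using $\omega^+\wedge\omega^- = \dVol_Y = \Omega\wedge\bar\Omega$ (up to a constant) gives $|f|^2 = c\,A\cdot B$; since $\log |f|^2$ is pluriharmonic where $f\ne 0$ (everywhere, since $\Omega$ trivializes the canonical bundle), the factors $\log A$ and $\log B$ are individually harmonic, so $A = |e^{\phi(z_1)}|^2$ and $B = |e^{\psi(z_2)}|^2$ for local holomorphic $\phi,\psi$. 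Setting $w_1 := \int e^{\phi(z_1)}\,dz_1$ and $w_2 := \int e^{\psi(z_2)}\,dz_2$ yields coordinates in which $\omega_Y = \tfrac{i}{2}(dw_1\wedge d\bar w_1 + dw_2\wedge d\bar w_2)$ and $\Omega = c'\,dw_1\wedge dw_2$; that is, $(\omega_Y,\Omega)$ is locally the standard flat Calabi--Yau structure on $\bC^2$.

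These local charts patch to a developing map from the orbifold universal cover of $Y$ to $\bC^2$ intertwining $(\omega_Y,\Omega)$ with the standard flat structure. Completeness of the flat metric makes this developing map a global isomorphism, and by the orbifold Bieberbach theorem for flat K\"ahler surfaces, $Y = A/G$ for a complex torus $A = \bC^2/\Lambda$ and a finite group $G\subset \SU(2)$ of affine holomorphic automorphisms (matching the $ADE$ singularity structure established in \autoref{contract}). The automorphism $T_Y$ preserves the developed flat structure, hence lifts to an affine holomorphic automorphism of $A$, so $X$---being the minimal resolution of $Y$---is a (generalized) Kummer $K3$ with $T$ induced by an affine torus automorphism. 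The principal obstacle is the first step above: closedness of $\omega^{\pm}$ genuinely requires that $\omega_{Y,Nh}$ be K\"ahler for every $N$ (the Yau-theorem input, not available on a bare complex manifold), and both the identification with eigencurrents and the globalization of the flat coordinates require some care at the orbifold points of $Y$.
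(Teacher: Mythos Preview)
Your argument is essentially correct and closely parallels the paper's \emph{alternative argument} in \autoref{ssec:an_alternative_argument}, though with one genuinely new idea and a different emphasis. The paper's main proof of \autoref{main2} is much shorter: once \autoref{prop:exponent_foliations_ergodic_case} supplies the real-analytic, uniformly expanded/contracted line fields $W^\pm$, it invokes Ghys to upgrade them to holomorphic foliations, pulls these back to $X$, and then cites Cantat or Cantat--Favre directly. The paper's alternative argument goes further and proves flatness of $\omega_Y$ by a direct curvature computation in coordinates adapted to the holomorphic foliations, then appeals to an orbifold Bieberbach statement; this is exactly your Steps~4--5, except that you produce flat coordinates explicitly rather than compute the curvature tensor.

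Where your route genuinely diverges is in how you obtain the holomorphic foliations: instead of Ghys, you exploit the closedness of \emph{every} Ricci-flat metric $\omega_{Y,Nh}$ together with the pointwise diagonalization from \autoref{prop:exponent_foliations_ergodic_case} to deduce $d\omega^{\pm}=0$ by letting $N\to\infty$. This is an elegant use of Yau's theorem (and as you note, it really does require K\"ahlerness, not just a Hermitian structure). The subsequent identification $\omega^{\pm}=\nu_*\eta_{\pm}$ via the functional equation and the max/min iteration is a pleasant bonus the paper does not carry out.

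Two places need more care. First, the assertion that a closed, positive, real-analytic, rank-one $(1,1)$-form has \emph{holomorphic} null foliation is not automatic: smooth foliations by complex curves need not be holomorphic. The statement is true---it is the Monge--Amp\`ere foliation theorem of Bedford--Kalka---but you should cite it or give the argument (e.g.\ write $\omega^+=i\alpha\wedge\bar\alpha$ locally and use $d\omega^+=0$ to show $\bar\partial\alpha\wedge\alpha=0$). Second, the final passage from ``$\omega_Y$ is flat'' to ``$(X,T)$ is a classical Kummer example with $T$ lifted to an affine torus automorphism'' is sketched rather quickly; the paper itself defers to Cantat and Cantat--Favre for the lift of $T_Y$ and for identifying the quotient as $A/\{\pm 1\}$, and your Bieberbach-plus-normalizer outline, while on the right track, would need the same level of detail.
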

\begin{proof}

\autoref{prop:exponent_foliations_ergodic_case}(ii) gives two line subbundles of the tangent bundle of $Y$, invariant and uniformly expanded/contracted by the dynamics.
By Ghys \cite[Proposition 2.2]{Gh} \cite[Proposition 7.1]{CD}, these give two holomorphic foliations on the orbifold $Y$.
These can be pulled back to holomorphic foliations on $X\backslash V$, which automatically extend to $X$ exactly as in \cite[Proof of Corollary 7.6]{CD}), which are preserved by $T$ (alternatively, as Cantat pointed out to us, in the present case the extension of the holomorphic foliations follows from the explicit description of the singular points of $Y$ as quotient $ADE$ singularities).
At this point we can apply either a result of Cantat \cite[Theorem 7.4]{Cantat_K3}, or a later result of Cantat-Favre \cite[Theorem 3.1]{CF} (which only needs one invariant foliation), to conclude that $X$ is a Kummer K3 and $T$ is induced by an automorphism of the corresponding torus.
Note that these results apply in the K\"ahler case (for \cite{CF} one needs to use results of Brunella \cite{Br,Br2}).
\end{proof}


\subsection{An alternative argument}
  \label{ssec:an_alternative_argument}

At the end of the proof of \autoref{main2} above, after obtaining two $T$-invariant holomorphic foliations on $X$, we appealed to the general results of Cantat \cite{Cantat_K3} or Cantat-Favre \cite{CF} to conclude that $(X,T)$ is a Kummer example.

We now explain how to circumvent in our case some of the just cited arguments using the differential geometry of Ricci-flat metrics.
We suppose that on $X$ we have $\mu=\dVol$, and we have applied \autoref{prop:exponent_foliations_ergodic_case}, to obtain two $T_Y$-invariant transverse holomorphic foliations on $Y$, as above.

\begin{proposition}In this setting, the orbifold Ricci-flat K\"ahler metric $\omega_Y$ on $Y$ is in fact flat.
\end{proposition}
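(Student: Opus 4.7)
The plan is to establish flatness of $\omega_Y$ from two complementary ingredients: (i) the orthogonality of the two holomorphic foliations $W^\pm$ combined with the K\"ahler condition forces $\omega_Y$ to be \emph{locally} a K\"ahler product of two Riemann surface metrics, and (ii) the $T_Y$-dynamics on leaves forces the Gauss curvature of each such Riemann surface factor to vanish. Throughout, I would work on the smooth locus of $Y$, with flatness extending across the finitely many orbifold points by continuity of curvature on local smooth covers.

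For (i), at each smooth point of $Y$ I would invoke Frobenius to choose local holomorphic coordinates $(z_1,z_2)$ in which the leaves of $W^+$ are the slices $\{z_2=\mathrm{const}\}$ and the leaves of $W^-$ are the slices $\{z_1=\mathrm{const}\}$. The orthogonality $W^+\perp_{\omega_Y}W^-$ from \autoref{prop:exponent_foliations_ergodic_case} reads $g_{1\bar 2}=0$, giving the diagonal form
\[
  \omega_Y = a(z,\bar z)\,idz_1\wedge d\bar z_1 + b(z,\bar z)\,idz_2\wedge d\bar z_2
\]
for smooth positive $a,b$. Expanding $d\omega_Y=0$ against the four independent basis $3$-forms in these coordinates immediately yields $\partial_{z_2}a=\partial_{\bar z_2}a=0$ and $\partial_{z_1}b=\partial_{\bar z_1}b=0$, so $a=a(z_1,\bar z_1)$ and $b=b(z_2,\bar z_2)$. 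Thus $\omega_Y$ is locally an honest K\"ahler product of two Riemann surface metrics.

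For (ii), since $T_Y$ preserves $W^+$ and carries each leaf $L^+$ biholomorphically onto another leaf $T_Y(L^+)$, the eigenvalue conclusion $(T_Y^*\omega_Y)\vert_{W^+}=e^h\,\omega_Y\vert_{W^+}$ from \autoref{prop:exponent_foliations_ergodic_case} translates into the identity $(T_Y\vert_{L^+})^*(\omega_Y\vert_{T_Y(L^+)})=e^h\,\omega_Y\vert_{L^+}$ as K\"ahler forms on the Riemann surface $L^+$. A conformal rescaling of a Riemann surface metric by $e^h$ scales its Gauss curvature by $e^{-h}$, so the function $K_+\colon Y\to\bR$ assigning to each smooth point the Gauss curvature of the induced leaf metric at that point satisfies $K_+(T_Y x)=e^{-h}K_+(x)$, whence $K_+(y)=e^{-Nh}K_+(T_Y^{-N}y)$ for every $N\geq 1$. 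Since $K_+$ is continuous on the compact smooth locus and hence uniformly bounded, sending $N\to\infty$ forces $K_+\equiv 0$; applied symmetrically to the inverse dynamics on $W^-$ this also gives $K_-\equiv 0$.

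Combining (i) and (ii), in each local product chart both one-dimensional factors $a\,|dz_1|^2$ and $b\,|dz_2|^2$ are flat Riemann surface metrics, so $\omega_Y$ is a local product of flat metrics, hence flat. The main subtle step is the upgrade in (i) from a \emph{diagonal} K\"ahler metric to an honest \emph{product}, which is where the K\"ahler condition does real work and reduces flatness of the complex surface to flatness of two Riemann surface factors; once this reduction is in place, the dynamical coboundary identity together with compactness of $Y$ gives the vanishing of the leaf curvatures essentially for free.
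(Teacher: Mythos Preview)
Your argument is correct, and step (i) is exactly the paper's setup: adapted holomorphic coordinates for the two transverse holomorphic foliations, orthogonality giving a diagonal metric, and $d\omega_Y=0$ forcing $a=a(z_1,\bar z_1)$, $b=b(z_2,\bar z_2)$, so that locally $\omega_Y$ is a genuine product.

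Step (ii) is where you diverge from the paper, and your route is a legitimate alternative. The paper computes the curvature tensor directly and uses the \emph{Ricci-flat equation} once more: from the product form one gets $R_{1\bar 1 2\bar 2}=0$ for free, and then $0=R_{1\bar 1}=a^{-1}R_{1\bar 1 1\bar 1}$ and $0=R_{2\bar 2}=b^{-1}R_{2\bar 2 2\bar 2}$ kill the remaining holomorphic sectional curvatures. You instead use the \emph{dynamics}: the relation $(T_Y^*\omega_Y)\vert_{W^+}=e^h\,\omega_Y\vert_{W^+}$ forces the leafwise Gauss curvature to satisfy $K_+\circ T_Y=e^{-h}K_+$, and boundedness on the compact orbifold gives $K_+\equiv 0$ (and symmetrically $K_-\equiv 0$). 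Since for a local K\"ahler product the only possibly nonzero curvature components are the two leafwise holomorphic sectional curvatures, flatness follows. Your approach has the pleasant feature that, once the orthogonal holomorphic splitting is in hand, it does not invoke the Ricci-flat condition again; the paper's approach is a clean pointwise curvature computation that does not appeal to global dynamics or compactness.

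One phrasing to tighten: the smooth locus of $Y$ is \emph{not} compact, so ``continuous on the compact smooth locus'' does not immediately yield boundedness of $K_\pm$. The fix is what you already indicated at the outset: $K_\pm$ is computed from the smooth orbifold metric in local uniformizing charts and therefore extends continuously across the finitely many orbifold points, so it is bounded on the compact orbifold $Y$.
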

\begin{proof}
At any point $x\in Y$ we have the two orbifold Ricci-flat K\"ahler metrics $\omega_Y$ and $\omega_{Y,h}=T_Y^*\omega_Y$, which have the property that the eigenvalues of $\omega_{Y,h}$ with respect to $\omega_Y$ are $e^h$ and $e^{-h}$, and the corresponding eigenvectors (these are eigenvectors of the endomorphism of the tangent space given by composing $\omega_{Y,h}$ with $\omega_Y^{-1}$) span the stable and unstable holomorphic foliations $\mathcal{F}_{\pm}$ respectively. By construction, these eigenvectors are $\omega_Y$-orthogonal, hence so are $\mathcal{F}_+$ and $\mathcal{F}_-$. Near $x$ we can find local holomorphic functions $z_1,z_2$ such that $\ker dz_1=\mathcal{F}_+,\ker dz_2=\mathcal{F}_-$, which implies that $z_1,z_2$ give local holomorphic coordinates near $x$ (on the local orbifold cover if $x$ is singular). Therefore near $x$ we can write
$$\omega_Y=a idz_1\wedge d\ov{z}_1+b idz_2\wedge d\ov{z}_2,$$
where $a,b$ are local smooth positive functions.
But then
$$0=d\omega_Y=\left(\frac{\de a}{\de z_2}dz_2+\frac{\de a}{\de \ov{z}_2}d\ov{z}_2\right)\wedge idz_1\wedge d\ov{z}_1+\left(\frac{\de b}{\de z_1}dz_1+\frac{\de b}{\de \ov{z}_1}d\ov{z}_1\right)\wedge idz_2\wedge d\ov{z}_2,$$
which imply that $a$ is independent of the $z_2,\ov{z}_2$ variables and $b$ is independent of the $z_1,\ov{z}_1$ directions.

From the definition of curvature we have
$$R_{i\ov{j}k\ov{\ell}}=-\de_i\de_{\ov{j}}g_{k\ov{\ell}}+g^{p\ov{q}}\de_i g_{k\ov{q}}\de_{\ov{j}}g_{p\ov{\ell}},$$
where in our coordinates
$$g_{1\ov{2}}=g_{2\ov{1}}=g^{1\ov{2}}=g^{2\ov{1}}=0,\quad g_{1\ov{1}}=a, g_{2\ov{2}}=b,\quad g^{1\ov{1}}=a^{-1},g^{2\ov{2}}=b^{-1},$$
$$\de_2g_{1\ov{1}}=\de_{\ov{2}}g_{1\ov{1}}=0,\quad \de_1g_{2\ov{2}}=\de_{\ov{1}}g_{2\ov{2}}=0.$$
In particular
$$R_{2\ov{2}1\ov{1}}=-\de_2\de_{\ov{2}}a+g^{p\ov{q}}\de_2 g_{1\ov{q}}\de_{\ov{2}}g_{p\ov{1}}=g^{1\ov{1}}\de_2 g_{1\ov{1}}\de_{\ov{2}}g_{1\ov{1}}=0,$$
while Ricci-flatness gives
$$0=R_{1\ov{1}}=g^{p\ov{q}}R_{p\ov{q}1\ov{1}}=g^{1\ov{1}}R_{1\ov{1}1\ov{1}}+g^{2\ov{2}}R_{2\ov{2}1\ov{1}}=a^{-1}R_{1\ov{1}1\ov{1}},$$
giving $R_{1\ov{1}1\ov{1}}=0$, and
$$0=R_{2\ov{2}}=g^{p\ov{q}}R_{p\ov{q}2\ov{2}}=g^{1\ov{1}}R_{1\ov{1}2\ov{2}}+g^{2\ov{2}}R_{2\ov{2}2\ov{2}}=b^{-1}R_{2\ov{2}2\ov{2}},$$
giving $R_{2\ov{2}2\ov{2}}=0$ (using the K\"ahler identities). Next,
$$R_{1\ov{1}1\ov{2}}=-\de_1\de_{\ov{1}}g_{1\ov{2}}+g^{p\ov{q}}\de_1 g_{1\ov{q}}\de_{\ov{1}}g_{p\ov{2}}=0,$$
$$R_{1\ov{2}2\ov{2}}=-\de_1\de_{\ov{2}}g_{2\ov{2}}+g^{p\ov{q}}\de_1 g_{2\ov{q}}\de_{\ov{2}}g_{p\ov{2}}=g^{2\ov{2}}\de_1 g_{2\ov{2}}\de_{\ov{2}}g_{2\ov{2}}=0.$$
Thanks to the K\"ahler identities, we thus obtain that $R_{i\ov{j}k\ov{\ell}}=0$ for all $i,j,k,\ell$, hence $\omega_Y$ is a flat orbifold K\"ahler metric.
\end{proof}

Once we know that $\omega_Y$ is flat, this in turn implies that there is a finite orbifold cover $\pi\colon Z\to Y$ with $Z$ a compact complex $2$-torus (cf. the discussion in \cite[\S 6.4.3]{CZ}). The arguments in \cite{Cantat_K3,CF} can then be used to show that $T_Y$ lifts to an automorphism of $Z$, which is then affine linear, and that the map $\pi:Z\to Y$ is the quotient by an involution, and so $X$ is a Kummer K3 and $T$ is induced by an automorphism of the corresponding torus.




\section{The uniformly hyperbolic case}
  \label{sec:the_uniformly_hyperbolic_case}

After recalling in \autoref{ssec:recollections_from_pesin_theory} the needed facts from Pesin theory, we prove in \autoref{ssec:ergodicity_in_the_uniformly_hyperbolic_case} that if $\mu$ is uniformly hyperbolic and in the Lebesgue class, then it is in fact equal to $\dVol$.


\subsection{Recollections from Pesin theory}
  \label{ssec:recollections_from_pesin_theory}
This section collects some concepts and results from Pesin theory that will be used to prove ergodicity of $\dVol$, under an extra assumption of uniform hyperbolicity.
While Pesin theory is concerned with the non-uniformly hyperbolic setting, its conclusions apply to sets of almost full measure, on which hyperbolicity is uniform.

For the discussion in this subsection, $Z$ can be any compact complex surface, possibly an orbifold, $T_Z\colon Z\to Z$ a holomorphic automorphism, and $m$ a $T_Z$-invariant ergodic probability measure with nonzero Lyapunov exponents in the sense of \autoref{ssec:general_remarks_about_lyapunov_exponents}.

\begin{definition}[Uniform hyperbolicity]
  \label{def:uniform_expansion}
  We say that $m$ is \emph{uniformly hyperbolic} if there exists a constant $C>0$ such that for $m$-a.e. $x$, we have for $N\geq 1$ that
  \begin{align}
    \label{exp1} \log \norm{D_xT^{-N}_Z\vert_{W^u(x)}} & \leq -\frac{1}{C}N + C\\
    \label{exp2} \log \norm{D_xT^N_Z\vert_{W^s(x)}} & \leq -\frac{1}{C}N + C
  \end{align}
  and the angle between $W^u(x)$ and $W^s(x)$ is bounded below by $\frac{1}{C}$, where everything is measured relative to a fixed smooth Riemannian metric.
\end{definition}
In \cite[Def. 2.2.6]{BarreiraPesin_NonuniformHyperbolicity} sets $\Lambda^\ell_{\lambda\mu\ve j}$ are defined to which most considerations in loc.cit. apply.
A $T_Z$-invariant set of points $x$ which satisfy \autoref{def:uniform_expansion} is then contained in such a $\Lambda^\ell_{\lambda\mu\ve j}$ with $j=1$, $\ve=0$ and $\ell,\lambda,\mu$ only depending on $C$.

\subsubsection{Stable and Unstable manifolds}
  \label{sssec:stable_and_unstable_manifolds}
The following discussion is expanded in \cite[\S7-8]{BarreiraPesin_NonuniformHyperbolicity}.
Because the invariant measure $m$ has nonzero Lyapunov exponents, there exist for $m$-a.e. $x$ unique global immersed stable manifolds $\cW^s(x)$ which contain $x$ and with tangent space equal to $W^s(x)$ there.
For distinct points, stable manifolds either coincide or are disjoint.

Since in our case the stable manifolds are complex $1$-dimensional they are parametrized by $\bC$ \cite[Prop. 2.6]{BLS}.
For convenience of notation, we will use the canonical parametrizations given by the complex line $W^u(x)$, namely the holomorphic maps
\[
  \xi^s_x\colon W^s(x) \toisom \cW^s(x)
\]
normalized to have derivative the identity at the basepoints (where $0\mapsto x$).
Because the only holomorphic automorphisms of $\bC$ are affine, we immediately deduce that
\begin{itemize}
  \item For two points $x,y$ on the same stable manifold the composed map $(\xi^s_y)^{-1}\circ \xi^s_x$ is an affine map from $W^s(x)$ to $W^s(y)$.
  \item The maps $\xi^s_{x}$ and $T^{-1}_Z\circ \xi_{T_Z(x)}^s \circ  D_{x}T_Z $ coincide, because they both induce parametrizations of the stable manifolds at $x$ and have the same derivative at the origin.
  Equivalently $T_Z\circ \xi^s_{x}  =  \xi_{T_Z(x)}^s\circ D_xT_Z$.
\end{itemize}
Therefore the stable manifolds carry canonical affine structures and are parametrized equivariantly for the dynamics.
The same discussion applies to unstable manifolds when using $T_Z^{-1}$ instead of $T_Z$, and we will denote their parametrizations by $\xi^u_x$.

\subsubsection{Charts and size}
  \label{sssec:charts_and_size}
To discuss the geometry of stable manifolds, fix finitely many open charts covering $Z=\cup_\alpha U_\alpha$ and view each chart as equipped with its flat Euclidean metric (in the orbifold sense when necessary).
There exists an $\ve>0$ which is a Lebesgue number of this covering, i.e. for any $x\in Z$ there exists a chart $U_\alpha$ such that the ball of radius $\ve$ around $Z$ is contained in $U_\alpha$.
All considerations below will be in these charts and all objects will be considered only in balls of radius at most $\ve/2$, so that the Euclidean and fixed background metric are comparable, up to uniform constants.

Implicit constants occurring below will be called \emph{uniform} if they only depend on the automorphism $T_Z$, the covering fixed above, and a fixed smooth ambient metric.

\subsubsection{Local stable manifolds}
  \label{sssec:local_stable_manifolds}
Recall that the global stable manifolds $\cW^s(x)$ are only immersed and are patched from local stable manifolds $\cW^s_{loc}(x)$.
The geometry of the local stable manifolds is described in \cite[Thm. 7.5.1]{BarreiraPesin_NonuniformHyperbolicity}.
Most importantly, under the uniform hyperbolicity assumptions in \autoref{def:uniform_expansion} the constants (that appear in loc.cit.) $r$ giving the radius, and $D$ giving the H\"older constant of the derivative, depend only on the constant describing the uniform hyperbolicity (by Thm. 7.5.1(5) of loc.cit.).
As a consequence, using the charts from \autoref{sssec:charts_and_size} to map a ball of radius $r$ in $W^{s/u}(x)$ to $Z$, the stable resp. unstable manifolds will be contained in disjoint cones around $W^{s}(x)$ resp. $W^u(x)$, with angle between the cones uniformly bounded below.



\subsection{Ergodicity in the uniformly hyperbolic case}
  \label{ssec:ergodicity_in_the_uniformly_hyperbolic_case}

\begin{theorem}\label{thm:uniformly_hyperbolic_case}
Suppose that the measure of maximal entropy $\mu$ on $X$ is in the Lebesgue class and additionally under the contraction $\nu\colon X\to Y$ from \autoref{ssec:preliminaries_on_currents_and_orbifolds}, the measure $\nu_*\mu$ is uniformly hyperbolic on $Y$ in the sense of \autoref{def:uniform_expansion}.

Then $\mu=\dVol$ on $X$, and so $(X,T)$ is a Kummer example by \autoref{main2}.
\end{theorem}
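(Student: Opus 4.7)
The plan is to apply a quantitative Hopf argument to prove that $\dVol$ is ergodic for $T$; combined with the decomposition $\mu=\dVol(S)^{-1}\mathbf{1}_S\dVol$ from the introduction, this will force $S=X$ modulo null sets and reduce the problem to \autoref{main2}. I would transfer the analysis to $Y$ by writing $\nu_*\mu=c\mathbf{1}_{S_Y}\dVol_Y$ with $S_Y:=\nu(S)$ and $c:=\dVol_Y(S_Y)^{-1}$, so the goal becomes $\dVol_Y(Y\setminus S_Y)=0$.

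First, because $\nu_*\mu$ is uniformly hyperbolic, the usual conclusions of Pesin theory hold with \emph{uniform} constants (as remarked after \autoref{def:uniform_expansion}) on a $T_Y$-invariant set $\Lambda\subset Y$ of full $\nu_*\mu$-measure.  For every $x\in\Lambda$ the global holomorphic stable/unstable manifolds $\cW^{s/u}(x)$ exist as in \autoref{sssec:stable_and_unstable_manifolds}, the local pieces $\cW^{s/u}_{loc}(x)$ have a uniform size $r_0>0$ and vary H\"older-continuously in $x$, and the holonomy maps between small transversals along either foliation are absolutely continuous with uniformly bounded Jacobians.  Consequently each $x\in\Lambda$ sits in a ``Pesin box'' $B_x$ of uniform radius $r_0$ on which the two foliations produce a local product structure.

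Next, I would run Hopf's argument.  For any $\varphi\in C(Y)$, Birkhoff's theorem gives forward and backward averages $\varphi^\pm$ defined $\dVol_Y$-almost everywhere; the exponential contraction \eqref{exp2} forces $\varphi^+$ to be $\dVol_Y$-a.e.\ constant along stable leaves, and \eqref{exp1} similarly forces $\varphi^-$ to be constant along unstable leaves.  The mean ergodic theorem gives $\varphi^+=\varphi^-$ a.e., so the common function is constant along both foliations, and by absolute continuity of the holonomies it is $\dVol_Y$-a.e.\ constant on each $B_x$.  Letting $\varphi$ range over a countable $C^0$-dense family yields the same conclusion for any $T_Y$-invariant bounded measurable function; applied to $\mathbf{1}_{S_Y}$ and using $\Lambda\subset S_Y$, this gives $B_x\subseteq S_Y$ modulo $\dVol_Y$-null for every $x\in\Lambda$.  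Hence $S_Y$ contains, modulo null, the open $T_Y$-invariant set $U:=\bigcup_{x\in\Lambda}B_x$.

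The \emph{main obstacle} is to promote this to $\dVol_Y(Y\setminus U)=0$.  If $Y\setminus U$ had positive $\dVol_Y$-measure it would admit a further ergodic component of $\dVol_Y$ disjoint from $\nu_*\mu$, and purely local Hopf information cannot exclude such a component.  A natural global input is that uniform hyperbolicity extends by continuity to the invariant closure $\overline\Lambda$, making $T_Y|_{\overline\Lambda}$ a topologically transitive uniformly hyperbolic basic set (transitivity being inherited from the mixing of $\mu$), together with the fact that the stable/unstable leaves issuing from $\overline\Lambda$ are entire holomorphic curves in $Y$ whose saturation should therefore have full $\dVol_Y$-measure---an input in the spirit of the Brody-type arguments appearing later in Step~3 and in \cite{DS}.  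Once $\dVol_Y(Y\setminus U)=0$ is secured we get $c=1$, $\nu_*\mu=\dVol_Y$ and hence $\mu=\dVol_X$, so \autoref{main2} concludes.
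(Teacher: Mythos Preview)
Your approach via the Hopf argument is exactly the paper's, and the construction up through $U=\bigcup_{x\in\Lambda}B_x\subseteq S_Y$ (modulo null) is correct in spirit.  But the ``main obstacle'' you identify is not an obstacle, and invoking Brody curves or basic-set machinery here conflates Step~2 with Step~3.

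The missing observation is elementary.  You have already asserted that each $B_x$ has \emph{uniform} radius $r_0$; since $\Lambda$ has full $\nu_*\mu$-measure, equivalently full $\dVol_Y$-measure inside $S_Y$, this says that for $\dVol_Y$-a.e.\ $x\in S_Y$ a fixed metric ball $B(x,\varepsilon)$ is essentially contained in $S_Y$.  Let $V$ be the essential interior of $S_Y$.  The previous sentence forces $B(y,\varepsilon)\subset V$ for every $y\in V$, so $V$ is open and closed in the connected space $Y$; since $V\neq\emptyset$ we get $V=Y$ and $\dVol_Y(Y\setminus S_Y)=0$.  No global input---entire curves, transitivity of $\overline{\Lambda}$, Dinh--Sibony---is required; the uniformity you already invoked does all the work.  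Your worry that ``purely local Hopf information cannot exclude'' a second ergodic component is simply mistaken once the local boxes have a uniform lower size bound.

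This is precisely what the paper does: it declares at the outset that it suffices to find a uniform $\varepsilon>0$ with $|B(x,\varepsilon)\cap S|=|B(x,\varepsilon)|$ for Lebesgue-a.e.\ $x\in S$, and then spends the proof establishing that.  The paper is more careful than you at one point: your phrase ``Pesin box of uniform radius $r_0$'' hides a genuine step, namely that the local product map $\Delta\times\Delta\to Y$ (built from stable leaves through a.e.\ point of $\cW^u(y)$ and extended by continuity) is injective with image containing a metric ball of uniform radius around $y$.  The paper verifies this via Invariance of Domain together with the uniform lower bound on the angle between $W^s$ and $W^u$, which through the law of sines in a Euclidean chart yields the quantitative estimate $|z|+|w|\leq A\,|y-y'|$.
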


The statements $\mu=\dVol_X$ and $\nu_*\mu = \dVol_Y$ are equivalent, since the set contracted by $\nu$ has Lebesgue measure zero.
However, even when $(X,T)$ is a Kummer example, $\dVol_X$ is not uniformly hyperbolic for a smooth metric on $X$.

\begin{proof}
Let $S\subset X$ be the set defined in \autoref{eqn:mu_S_dVol}, so that $\mu=\frac{\dVol}{|S|}\bigg|_S$.
All arguments below are on $Y$ so for simplicity of notation let $S$ denote the image of this set under $\nu$.

Assume that the uniform hyperbolicity condition holds for every point of $S$ (otherwise replace $S$ with the intersection of all iterates of the set on which uniform hyperbolicity holds, still a set of full $\mu$-measure).
It suffices to show that there exists a uniform $\ve>0$ such that for Lebesgue-a.e. $x\in S$ the ball $B(x,\ve)$ of radius $\ve$ around $x$ satisfies $|B(x,\ve)\cap S|=|B(x,\ve)|$, i.e. the ball is essentially (up to Lebesgue measure $0$) contained in $S$ (we'll use $|B|$ to denote the volume of $B$ for the canonical volume form).

This last property, in turn, follows from the Hopf argument and the uniformity of hyperbolicity.
Specifically, from Pesin theory there exist stable and unstable manifolds $\cW^s(x),\cW^u(x)$ through $\mu$-a.e. point $x$.
From our uniform hyperbolicity assumption, the sizes of the local stable/unstable manifolds in the sense of \cite[\S8.1]{BarreiraPesin_NonuniformHyperbolicity} are uniformly bounded below for every $x\in S$, and so are their angles (this is \cite[\S8.1.3]{BarreiraPesin_NonuniformHyperbolicity}).
Indeed there exists a single $\ell>0$ such that the Oseledets-regular level sets $\Lambda^\ell_{\bullet}$ that appear in \cite{BarreiraPesin_NonuniformHyperbolicity} are equal to $S$.
Furthermore, the local stable and unstable manifolds depend continuously in the $C^1$-topology for points in $S$, by \cite[\S8.1.4]{BarreiraPesin_NonuniformHyperbolicity} (even better, H\"older continuity holds along the lines in \cite{ABF_HolderContinuity}).

Now we apply the Hopf argument.
From \cite[Lemma 9.1.2]{BarreiraPesin_NonuniformHyperbolicity}, there exists a set $B_S$ of zero Lebesgue measure such that if $y\in S$ and $z,w\in \cW^u(y)\setminus B_S$ then $z,w\in S$.
Moreover, by the absolute continuity of the unstable foliations \cite[Thm. 8.6.8]{BarreiraPesin_NonuniformHyperbolicity}, for $\mu$-a.e. $y\in S$ the set {$B_S\cap \cW^{u}(y)$} has zero Lebesgue measure in $\cW^u(y)$.
Let $S'\subset S$ be the set of $y$ with this property, so that $|S\setminus S'|=0$.

For $y\in S'$ we have that Lebesgue-a.e. $z\in \cW^u(y)$ is in $S$, hence admits stable manifolds $\cW^s(z)$ of size bounded below and which depend continuously on $z$.
Hence we get a continuous injective map $\Delta\times \Delta\to Y$, where $\Delta=\left\lbrace |z|<1 \colon z\in \bC \right\rbrace$, as follows.
The map is defined by $(z,w)\mapsto \xi^s_z(w)$ where the first factor {of $\Delta\times\Delta$} is identified with a disc of size uniformly bounded below in $\cW^u(y)$, and the second factor with a disc in $W^s(z)$ as $z$ varies in $\cW^u(y)$ (the disc $\Delta$ of radius $1$ is rescaled from the bounded below radius $r$ discussed in \autoref{sssec:local_stable_manifolds}).
While the map is defined only for Lebesgue-a.e. $z$, since it is continuous as a map $z\mapsto Hol(\Delta,Y)$ (by \cite[\S8.1.4]{BarreiraPesin_NonuniformHyperbolicity}), it extends to the desired continuous map $\Delta\times \Delta \to Y$.

By the Invariance of Domain Theorem (\cite[Thm. 2B.3]{Hatcher_AlgebraicTopology}), this continuous injective map contains an open set around $y\in Y$.
It remains to check that this open set contains a ball of size uniformly bounded below.
For this, it suffices to check that there exists a uniform $\ve>0$ such that if $(z,w)\in\Delta\times\Delta$, $y'=\xi_z^s(w)$ and $|y-y'|\leq \ve$ then $|z|+|w|\leq \frac{1}{10}$.
Provided this last property, it is clear that every point within $\ve$ of $y$ is in the image, since the set in question is both open (by invariance of domain) and closed by the property that remains to be checked.

Suppose therefore that $y'=\xi_z^s(w)$.
Recall that in Euclidean triangles, by the law of sines if $a$ is the length of one side and $\alpha$ is the opposite angle, then $\frac{a}{\sin \alpha}$ controls any other side.
In the charts described in \autoref{sssec:charts_and_size}, the stable and unstable manifolds at $z$ are contained in cones around $W^s(z)$ and $W^u(z)$ with angle between them uniformly bounded below (see \autoref{sssec:local_stable_manifolds}).
Therefore there exists a uniform constant $A$ such that $A|y-y'|\geq |z|+|w|$, which suffices for our purposes.
\end{proof}



\section{The general case}
	\label{gen}

We complete the proof of the main theorem by establishing Step 3 from the introduction (\autoref{ssec:proof_outline}).

\subsection{Proving uniform hyperbolicity}
Our goal is the following, which thanks to \autoref{main2} (Step 1) and \autoref{thm:uniformly_hyperbolic_case} (Step 2) completes the proof of \autoref{main}:
\begin{theorem}
	\label{main3}
Suppose that the measure of maximal entropy $\mu$ on $X$ is in the Lebesgue class.
Then under the contraction $\nu\colon X\to Y$, the measure $\nu_*\mu$ is uniformly hyperbolic on $Y$.
\end{theorem}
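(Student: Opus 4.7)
The plan is to prove uniform hyperbolicity by showing that the unstable expansion cocycle $\alpha_u(x) := \log\norm{D_xT_Y\vert_{W^u(x)}}$, whose $\mu$-mean is $h/2$ by \autoref{sssec:absolute_continuity}, is cohomologous to the constant $h/2$ via a coboundary $\phi_u\circ T_Y - \phi_u$ with $\phi_u\in L^\infty(\mu)$, and symmetrically for the stable expansion $\alpha_s := -\log\norm{D_xT_Y\vert_{W^s(x)}}$. A bounded coboundary makes the Birkhoff sums telescope and be uniformly bounded, yielding the pointwise estimates of \autoref{def:uniform_expansion}; the angle bound between $W^u$ and $W^s$ follows from a parallel cocycle argument applied to the projection along one Oseledets subbundle.

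First I would construct the measurable coboundary $\phi_u$ by comparing, on each unstable manifold $\cW^u(x)$, two natural positive measures: the conditional of $\mu=\eta_+\wedge\eta_-$ along the leaf, and the restriction $\eta_+\vert_{\cW^u(x)}$. Because $\mu$ is absolutely continuous with respect to $\dVol$, its conditional on $\cW^u$ is in the Lebesgue class of the leaf in the affine parametrization of \autoref{sssec:stable_and_unstable_manifolds}, so the product structure forces both measures to be mutually absolutely continuous. Under $T_Y$ the eigencurrent restriction transforms by the scalar $e^h$ coming from $T^*\eta_+=e^h\eta_+$, while the Lebesgue conditional transforms by the Jacobian $e^{2\alpha_u(x)}$; equating the two transformation laws yields the cohomological equation $\alpha_u - h/2 = \phi_u\circ T_Y - \phi_u$, where $\phi_u$ is (up to a factor of $1/2$) the logarithm of the density between the two measures.

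Next I would show that in the canonical affine parametrization $\xi^u_x\colon W^u(x)\toisom\cW^u(x)$, the pullback $(\xi^u_x)^*(\eta_+\vert_{\cW^u(x)})$ is a constant multiple of Lebesgue on $W^u(x)\isom\bC$. The key input is the equivariance $T_Y\circ\xi^u_x = \xi^u_{T_Y x}\circ DT_Y\vert_{W^u(x)}$ together with $T^*\eta_+=e^h\eta_+$: iterating, the affine self-similarity together with the continuity of the potentials of $\eta_+$ (from \autoref{sssec:the_eigencurents}) forces the density of $\eta_+$ on each leaf to be constant in affine coordinates. The symmetric statement holds for $\eta_-\vert_{\cW^s(x)}$, and in particular $\phi_u$ depends only on the leaf.

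The main obstacle is upgrading $\phi_u$ from merely measurable to $L^\infty$. Suppose for contradiction that $\phi_u$ is unbounded, giving a sequence of points at which the unstable eigencurrent density on the leaf is anomalously large or small compared to the ambient $\omega_Y$-size of the leaf. Rescaling the affine unstable parametrizations at these points by matching factors produces holomorphic disks in $Y$ of unbounded $\omega_Y$-diameter along which $\int f_n^*(\eta_++\eta_-)\to 0$. A standard Brody re-parametrization extracts an entire holomorphic curve $f\colon\bC\to Y$ with vanishing intersection against both eigencurrents $\eta_\pm$. Such an entire curve is excluded by the theorem of Dinh-Sibony \cite{DS} applied on the orbifold $Y$ from \autoref{contract}, giving the desired contradiction and completing the proof of \autoref{main}.
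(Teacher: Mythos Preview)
Your plan is essentially the paper's own argument: construct the coboundaries by comparing $\eta_+\vert_{\cW^u}$ with leafwise Lebesgue (the paper does this via de Th\'elin--Dinh \cite[Lemma~5.3]{deThelinDinh_DynamicsKahler}), prove the restricted current is flat in the affine parametrization, then get the $L^\infty$ bound by a Brody argument leading to an entire curve forbidden by Dinh--Sibony. The telescoping of Birkhoff sums to get \autoref{def:uniform_expansion} is exactly how the paper concludes.

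Two points need tightening. First, in the Brody step you assert that the limiting disks have $\int f_n^*(\eta_++\eta_-)\to 0$, but your argument only makes $\eta_+$ small on the unstable leaf. You need as a separate input that $(\xi^u_x)^*\nu_*\eta_-=0$ for $\mu$-a.e.\ $x$; this is not automatic from your discussion and is supplied by \cite[Thm.~5.3]{Cantat_K3} or \cite[Cor.~4.2, Prop.~4.4]{deThelinDinh_DynamicsKahler}. Without it the limiting entire curve need not annihilate $\eta_-$ and the Dinh--Sibony contradiction does not fire. Second, the Brody argument only directly rules out $\phi_u\to+\infty$ (density $e^{-2\phi_u}\to 0$); the lower bound on $\phi_u$ is obtained differently in the paper, by first observing that $\phi_u+\phi_s$ equals (up to a constant) the nonnegative angle function $\beta(x)=\tfrac12\log(a_0d_0)$, so upper bounds on $\phi_u$ and $\phi_s$ separately give the two-sided bound and simultaneously the angle estimate. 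Your phrase ``a parallel cocycle argument applied to the projection'' is the right idea, but you should make explicit that this same $\beta$ is what controls the angle.
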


Because sets contracted under $\nu\colon X\to Y$ have zero Lebesgue measure, $\mu$-measurable functions on $X$ are naturally identified with $\nu_*\mu$-measurable ones on $Y$.
We will not distinguish in notation between functions identified in this manner.

\subsubsection{Expansion/Contraction factors}
	\label{sssec:expansion_contraction_factors}

Recall (see \autoref{sssec:stable_unstable_directions} and \autoref{sssec:absolute_continuity}) that $\nu_*\mu$-a.e. $x\in Y$ has a decomposition of the tangent space:
\[
	\mathbf{T}_x Y = W^s(x) \oplus W^u(x)
\]
which is $T_Y$-invariant. i.e. $DT_Y(W^{u/s}(x))= W^{u/s}(T_Yx)$.
Fix now the Calabi--Yau metric $\omega_Y$ on $Y$ (see \autoref{sssec:orbifold_ricci_flat_metrics}) and define the $\nu_*\mu$-measurable functions:
\begin{align}
		\label{eqn:rho_us_definition}
		\rho^u(x):= \log \norm{DT_Y\vert_{W^u(x)}}_{\omega_Y}, \quad \rho^s(x):= \log \norm{DT_Y\vert_{W^s(x)}}_{\omega_Y}
	\end{align}
where the norm of the operator is for the metrics $\omega_Y(x)$ and $\omega_Y(T_Yx)$ on the source and target tangent spaces.
The functions are $\nu_*\mu$-measurable since the spaces $W^{u/s}$ depend measurably on $x$.

\subsubsection{Computation in a basis}
	\label{sssec:computation_in_a_basis}
Let now $e_1\in W^u(x), e_2\in W^s(x)$ be a unimodular basis, i.e. using the dual basis $\sqrt{-1}e_1^\vee \wedge \ov{e_1}^\vee\wedge \sqrt{-1}e_2^\vee\wedge \ov{e_2}^\vee$ is the fixed volume form $\dVol_Y$.
There are two metrics on $\mathbf{T}_x Y$, one is $\omega_Y(x)$ and the other one is $\omega_{Y,Nh}(x)$, which is the pull-back by $T_Y^N$ of $\omega_Y(T_Y^Nx)$.

Suppose that in the fixed basis $\{e_1,e_2\}$ the metrics are represented by
\begin{align}
	\label{eqn:omega_0Nh_explicit}
	\omega_Y(x) =
	\begin{bmatrix}
		a_0 & b_0 \\
		\ov{b_0} & d_0 \\
	\end{bmatrix},
	\quad
	\omega_{Y,Nh}(x) =
	\begin{bmatrix}
		a_{Nh} & b_{Nh} \\
		\ov{b_{Nh}} & d_{Nh} \\
	\end{bmatrix}
\end{align}
and these matrices have determinant equal to $1$.
The relation to the functions defined in \autoref{eqn:rho_us_definition} is:
\[
	\rho^u(x) = \frac{1}{2} \log \left( \frac{a_h}{a_0} \right),
	\quad
	\rho^s(x) = \frac{1}{2} \log \left( \frac{d_h}{d_0} \right)
\]

\subsubsection{Defining $\beta$}
	\label{sssec:defining_beta}
Consider now the quantity
\begin{align}
	\label{eqn:definition_beta}
	\beta(x):= \frac{1}{2} \log(a_0d_0)
\end{align}
where $a_0,d_0$ are the entries in \autoref{eqn:omega_0Nh_explicit}.
Note that $\beta(x)\geq 0$ and it is independent of the earlier choice of unimodular basis.
Indeed $\log(a_0 d_0)$ can be expressed geometrically as follows.
The decomposition $\mathbf{T}_xY = W^s(x)\oplus W^u(x)$ and the metric $\omega_Y$ determine another metric $\widetilde{\omega_Y}$ defined as the restriction of $\omega_Y$ to $W^s$ and $W^u$ and declaring $W^s$ and $W^u$ to be orthogonal.
The log of the ratio of volume forms determined by $\omega_Y$ and $\widetilde{\omega_Y}$ is exactly $2\beta(x)=\log (a_0 d_0)$.
In particular $\beta$ is also a measurable function, since it is defined using standard constructions on measurable objects.

Finally, we have:
\begin{align}
	\label{eqn:the_sum_is_a_coboundary}
	T^*_Y\beta - \beta = \rho^s + \rho^u
\end{align}
which says that $\rho^s+\rho^u$ is a coboundary (in the dynamical sense).
Indeed, in explicit coordinates as above:
\[
	\rho^s(x) + \rho^u(x) = \frac{1}{2}\log (a_h d_h) - \frac{1}{2}\log (a_0 d_0)
\]
and the formula follows.

\subsection{The cohomological equation}
In this subsection we show that the expansion/contraction factors $\rho^u,\rho^s$ are cohomologous to constants via certain dynamical coboundary measurable functions $\alpha_s,\alpha_u$.
Crucially, $\alpha_s,\alpha_u$ will be shown to be bounded in \autoref{bounded}.

\subsubsection{Conditional measures}
  \label{sssec:conditional_measures}
To compare the restriction of the currents or \Kahler metrics to stable/unstable manifolds, it is useful to go through the intermediate notion of conditional measure.
Specifically, there exists by \cite[Thm. 9.4.1]{BarreiraPesin_NonuniformHyperbolicity} a partition $\zeta^s$ of $X$, with atom containing $x$ denoted $\zeta^s(x)$, with the following properties.
The partition is measurable, i.e. the atoms can be identified with the fibers of a measurable map $X\to X'$ to a standard measure space, and for $\nu_*\mu$-almost every $x\in X$ the atom $\zeta^s(x)$ is an open set in $\cW^s(x)$; such partitions are called \emph{subordinate} to the stable foliation.
Additionally the partition is contracted by the dynamics, i.e. $T_Y(\zeta^s(x))\subset \zeta^s(T_Y(x))$.
An analogous unstable partition $\zeta^u$ exists, but which is expanded by the dynamics.

Any measure on $X$ induces conditional (probability) measures on the atoms of $\zeta^{s/u}$.
By \cite[Cor.~G]{LY2} because $\mu$ is in the Lebesgue class, the conditional measures are also in the Lebesgue class.

\begin{proposition}[Cohomologous to a constant]
	\label{prop:cohomologous_to_a_constant}
In the setting of \autoref{main}, there exist $\nu_*\mu$-measurable functions $\alpha_s,\alpha_u$ such that
\begin{align}
\label{1}
	\nu_*\eta_+\vert_{\cW^u(x)} &= e^{-2\alpha_u}\omega_Y\vert_{\cW^u(x)}\\
\label{2}
	\nu_*\eta_-\vert_{\cW^s(x)} &= e^{-2\alpha_s}\omega_Y\vert_{\cW^s(x)}
\end{align}
for $\nu_*\mu$-a.e. $x\in Y$, where $\cW^{s/u}$ denote the stable/unstable manifolds.
Moreover the functions satisfy:
\begin{equation}
\label{eqn:rho_cohomologous_to_constant}
		\rho^u = T_Y^*\alpha_u - \alpha_u + \frac{h}{2},\quad \rho^s = T_Y^*\alpha_s - \alpha_s - \frac{h}{2}
\end{equation}
\end{proposition}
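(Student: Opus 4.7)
The plan is to construct $\alpha_u,\alpha_s$ as (minus one half times) the logarithm of the leafwise density of $\nu_*\eta_\pm$ relative to $\omega_Y$ on the unstable/stable manifolds, and then to deduce \autoref{eqn:rho_cohomologous_to_constant} directly from the transformation rule $T_Y^*\nu_*\eta_\pm=e^{\pm h}\nu_*\eta_\pm$ (which follows from $T^*\eta_\pm=e^{\pm h}\eta_\pm$ and $\nu\circ T=T_Y\circ\nu$).

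Since $\nu_*\eta_\pm$ inherit H\"older continuous local potentials from $\eta_\pm$ (by Dinh--Sibony \cite{DS}) away from the finitely many orbifold points of $Y$, and since for $\nu_*\mu$-a.e. $y$ the unstable/stable manifold $\cW^{u/s}(y)$ is an immersed holomorphic curve by \autoref{sssec:stable_and_unstable_manifolds}, the restrictions $\nu_*\eta_+|_{\cW^u(y)}$ and $\nu_*\eta_-|_{\cW^s(y)}$ are well-defined positive $(1,1)$-currents on those curves, i.e. positive measures.

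The analytical heart of the proof is to show that these restrictions are absolutely continuous with respect to $\omega_Y|_{\cW^{u/s}(y)}$. For this, combine the subordinate partitions $\zeta^{u/s}$ from \autoref{sssec:conditional_measures} with Ledrappier--Young \cite[Corollary G]{LY2} (as in \autoref{sssec:absolute_continuity}): since $\mu$ is in the Lebesgue class with positive Lyapunov exponent, the conditional measures $\mu_y^{u/s}$ of $\mu$ on the atoms of $\zeta^{u/s}$ are in the Lebesgue class on $\cW^{u/s}(y)$. On the other hand, $\mu=\eta_+\wedge\eta_-$ in the Bedford--Taylor sense, and a disintegration/slicing argument for wedge products of positive closed $(1,1)$-currents with continuous potentials, carried out in a Pesin chart where the (measurable) stable and unstable foliations have uniformly transverse, H\"older-continuously varying holomorphic leaves, identifies $\mu_y^u$ (resp.\ $\mu_y^s$) with the restriction $\nu_*\eta_+|_{\zeta^u(y)}$ (resp.\ $\nu_*\eta_-|_{\zeta^s(y)}$) up to a positive measurable factor depending only on the transverse leaf. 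Combining, the restrictions are absolutely continuous and we set
\[
 e^{-2\alpha_u(y)}:=\frac{d(\nu_*\eta_+|_{\cW^u(y)})}{d(\omega_Y|_{\cW^u(y)})}(y),\qquad
 e^{-2\alpha_s(y)}:=\frac{d(\nu_*\eta_-|_{\cW^s(y)})}{d(\omega_Y|_{\cW^s(y)})}(y),
\]
obtaining measurable functions of $y$ satisfying \autoref{1}--\autoref{2} by construction.

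Finally, to prove \autoref{eqn:rho_cohomologous_to_constant}, use that $T_Y$ restricts to a biholomorphism $\cW^u(x)\to\cW^u(T_Yx)$ by \autoref{sssec:stable_and_unstable_manifolds}, that $T_Y^*\nu_*\eta_+=e^h\nu_*\eta_+$, and apply \autoref{1} at $T_Yx$:
\[
 e^h\,\nu_*\eta_+\big|_{\cW^u(x)}
 = T_Y^*\bigl(\nu_*\eta_+\big|_{\cW^u(T_Yx)}\bigr)
 = (e^{-2\alpha_u}\circ T_Y)\cdot T_Y^*\omega_Y\big|_{\cW^u(x)}.
\]
By the definition of $\rho^u$ in \autoref{sssec:expansion_contraction_factors}, one has $T_Y^*\omega_Y|_{\cW^u(x)}=e^{2\rho^u}\,\omega_Y|_{\cW^u(x)}$ pointwise on the leaf; comparing leafwise densities at $y=x$ gives $\rho^u(x)=\alpha_u(T_Yx)-\alpha_u(x)+h/2$. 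The identity for $\rho^s$ is analogous, with the opposite sign because $T_Y^*\nu_*\eta_-=e^{-h}\nu_*\eta_-$.

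The main obstacle is the slicing step in the absolute continuity argument: since the Pesin foliations are only measurable (their leaves are holomorphic, but vary only H\"older-continuously along transversals), one cannot invoke a holomorphic Fubini directly. The H\"older continuity of the local potentials of $\eta_\pm$, together with Pesin's local product structure, is what allows the disintegration of the Bedford--Taylor wedge product $\mu=\eta_+\wedge\eta_-$ to be carried out rigorously; once this is in hand, the cohomological equations are a routine change of variables.
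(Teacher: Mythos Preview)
Your proposal is correct and follows the same route as the paper: show the restricted currents are absolutely continuous (via Ledrappier--Young for the conditionals of $\mu$, plus identifying these conditionals with the restrictions of $\eta_\pm$), then read off the cohomological equations from the transformation rule $T_Y^*\nu_*\eta_\pm=e^{\pm h}\nu_*\eta_\pm$. The one practical difference is that the paper resolves your acknowledged ``main obstacle''---the slicing step---by directly citing \cite[Lemma~5.3]{deThelinDinh_DynamicsKahler}, which asserts (independently of any hypothesis on $\mu$) that $\eta_+|_{\cW^u(x)}$ is in the same measure class as the conditional measure of $\mu$ along the unstable foliation, so you need not carry out the Pesin-chart disintegration by hand.
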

\begin{proof}
To check the existence of functions satisfying \autoref{1}, \autoref{2}, it suffices to show that for $\nu_*\mu$-a.e. $x$ the measure $\nu_*\eta_+\vert_{\cW^u(x)}$ is in the Lebesgue class, since then it must have a Radon-Nikodym derivative $e^{-2\alpha_u}$ relative to the measure $\omega_Y\vert_{\cW^u(x)}$.
But by \cite[Lemma 5.3]{deThelinDinh_DynamicsKahler}
the measures $\nu_*\eta_+\vert_{\cW^u(x)}$ are in the same measure class as the conditional measures of $\mu$ along the unstable foliation (regardless of any assumptions on $\mu$).
This property on $X$ pushes down to $Y$.
By the discussion in \autoref{sssec:conditional_measures}, the conditional measures are in the Lebesgue class on the atoms of their partitions $\nu_*\zeta^u$.
The existence of $\alpha_u,\alpha_s$ follows in the interior of the atoms of the partition.
Note finally that is $\zeta^u$ is a partition subordinated to the unstable foliation, then so is $T^i(\zeta^u)$ for any $i\geq 0$.
Since $\mu$-a.e. point has positive Lyapunov exponent, $\mu$-a.e. point will be in the interior of some atom of $T^i(\zeta^u)$ along the unstable foliation for some $i\geq 0$.
The relations in \autoref{eqn:rho_cohomologous_to_constant} follow from a computation:
\begin{multline}
e^{2\rho^u-2T_Y^*\alpha_u}\omega_Y\vert_{\cW^u(x)}=
e^{-2T_Y^*\alpha_u}\omega_{Y,h}\vert_{\cW^u(x)}=\\
= \nu_*T^*\eta_+\vert_{\cW^u(x)}=
e^h\nu_*\eta_+\vert_{\cW^u(x)} = e^{h-2\alpha_u}\omega_Y\vert_{\cW^u(x)}
\end{multline}
and similarly for $\rho^s$.
\end{proof}

Note also that combining \autoref{eqn:the_sum_is_a_coboundary} and \autoref{prop:cohomologous_to_a_constant} we see that
$$T_Y^*(\alpha_u+\alpha_s-\beta)-(\alpha_u+\alpha_s-\beta)=\rho^u+\rho^s-(\rho^u+\rho^s)=0,$$
and so by ergodicity of $\mu$:
\begin{equation}\label{sumbeta}
\alpha_u+\alpha_s=\beta+\delta,
\end{equation}
$\mu$-a.e., for some constant $\delta\in\mathbb{R}$.

The next proposition is analogous to \cite[Thm. 5.1]{CD}, where it is proved by a different argument involving renormalization along the stable manifolds.
\begin{proposition}[The restricted current is flat]
	\label{prop:the_restricted_current_is_flat}
	For $\nu_*\mu$-a.e. $x$ we have that
	\[
		(\xi_x^u)^*\nu_*\eta_+ = e^{-2\alpha_u(x)}\omega_Y(x)\big\vert_{W^u(x)}
	\]
	i.e. the pulled back current to the unstable tangent space is a flat metric.
\end{proposition}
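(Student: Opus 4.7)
Set $\hat\eta := \nu_*\eta_+$ and $\mu_x := (\xi^u_x)^*\hat\eta$. The goal is to prove that $\mu_x$, a closed positive $(1,1)$-current on $W^u(x) \cong \bC$, equals the translation-invariant form $e^{-2\alpha_u(x)}\omega_Y(x)\vert_{W^u(x)}$.

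The plan has three steps. First, I would translate the eigencurrent equation $T_Y^*\hat\eta = e^h\hat\eta$ into a functional equation on $\bC$ via the equivariance $T_Y\circ\xi^u_x = \xi^u_{T_Yx}\circ D_xT_Y$ from \autoref{sssec:stable_and_unstable_manifolds}; this produces $\mu_x = e^{-h}(D_xT_Y)^*\mu_{T_Yx}$, and iterating backward gives
\[
\mu_x = e^{nh}\,(D_xT_Y^{-n})^*\mu_{T_Y^{-n}x}
\]
for all $n\geq 1$, where the linear map $D_xT_Y^{-n}\colon W^u(x)\to W^u(T_Y^{-n}x)$ has operator norm $\exp\bigl(-\sum_{j=1}^{n}\rho^u(T_Y^{-j}x)\bigr)$, tending to zero exponentially fast since $\rho^u$ has positive mean by the Oseledets theorem.

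Second, I would recast the problem as a statement about densities at the origin. Let $L_y(w)$ denote the density of $\mu_y$ with respect to the flat translation-invariant hermitian measure on $W^u(y)$ induced by $\omega_Y(y)\vert_{W^u(y)}$. Then $L_y(0) = e^{-2\alpha_u(y)}$ by \autoref{prop:cohomologous_to_a_constant}. Using the coboundary equation $\rho^u = T_Y^*\alpha_u - \alpha_u + h/2$, a direct calculation of the expansion factor $e^{nh}\|D_xT_Y^{-n}\|^2$ yields
\[
\frac{L_x(z)}{L_x(0)} = \frac{L_{T_Y^{-n}x}\bigl(D_xT_Y^{-n}z\bigr)}{L_{T_Y^{-n}x}(0)}
\]
for every $z\in W^u(x)$ and every $n\geq 1$. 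Since $D_xT_Y^{-n}z$ approaches the origin of $W^u(T_Y^{-n}x)$ exponentially fast, the desired flatness reduces to showing the right-hand ratio tends to $1$ as $n\to\infty$.

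The third step, and the main obstacle, is establishing this convergence uniformly along the backward $T_Y$-orbit of $x$, without any a priori bound on $\alpha_u$ (whose uniform boundedness is only proved in a later section). The key inputs I would use are the H\"older regularity of the local potentials of $\hat\eta$ on $Y$ \cite{DS}, together with the normalization $D_0\xi^u_y = \id$, which gives uniformly good control of the parametrizations on small balls around the origin. These combine to provide a uniform modulus of continuity of the pulled-back measures $\mu_y$ near the origin that, through a Lebesgue differentiation / Birkhoff-type argument applied along the backward orbit of a $\nu_*\mu$-generic $x$, forces the ratio above to tend to $1$. This yields $L_x(z) = L_x(0)$ identically, which is the claimed flatness.
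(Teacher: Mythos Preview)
Your overall strategy --- exploiting the equivariance $T_Y\circ\xi^u_x = \xi^u_{T_Yx}\circ D_xT_Y$ together with the coboundary relation \autoref{eqn:rho_cohomologous_to_constant} to transport the local behaviour of $\mu_x$ at the origin out to all of $W^u(x)$ --- is exactly the paper's. Your functional equation $L_x(z)/L_x(0) = L_{T_Y^{-n}x}(D_xT_Y^{-n}z)/L_{T_Y^{-n}x}(0)$ is correct as an identity of measurable functions in $z$.

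The gap is in your third step. The density $L_y$ is only an $L^1_{\mathrm{loc}}$ function: up to the smooth Jacobian of $\xi^u_y$ it equals $e^{-2\alpha_u}$ restricted to the leaf, and $\alpha_u$ is merely measurable at this point of the argument. H\"older regularity of the potential of $\hat\eta$ gives \emph{no} modulus of continuity for the density --- the Laplacian of a H\"older function can be as singular as one likes (think $|z|^{2\gamma}$ with $0<\gamma<1$). So the mechanism you propose for $L_{T_Y^{-n}x}(w_n)/L_{T_Y^{-n}x}(0)\to 1$ is not available, and since the boundedness of $\alpha_u$ is only established \emph{after} this proposition, there is no substitute control along the backward orbit.

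The paper sidesteps this by replacing pointwise densities with the averaged quantity
\[
f(x,r)=\frac{1}{\pi r^2}\int_{W^u(x,r)}(\xi^u_x)^*\hat\eta,
\]
the balls being taken for the normalized metric $e^{-2\alpha_u(x)}\omega_Y(x)|_{W^u(x)}$. Lebesgue differentiation gives $f(x,r)\to 1$ as $r\to 0$ for $\nu_*\mu$-a.e.\ $x$; equivariance plus the coboundary relation yields the clean scaling $f(T_Y^{-1}x,e^{-h}r)=f(x,r)$; ergodicity (the backward orbit visits the positive-measure set $\{|1-f(\cdot,r)|\le\ve\ \forall r<r_\ve\}$ for arbitrarily large times) then forces $f(x,r)\equiv 1$. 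This only pins down the mass of $\mu_x$ on balls centred at the origin, so the paper needs a second, purely geometric step --- comparing the affine reparametrizations $\xi^u_x,\xi^u_y$ for $y\in\cW^u(x)$ and showing the scaling constant $C_{x,y}$ equals $1$ --- to conclude that $\mu_x$ is genuinely flat. Your pointwise approach, had it worked, would have bypassed this last step; the price is precisely the regularity you do not have.
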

\begin{proof}
	Consider the function $f(x,r)$ defined for $\nu_*\mu$-a.e. $x$ and $r>0$:
	\[
		f(x,r):= \frac{\int_{W^u(x,r)}(\xi_x^u)^*\nu_*\eta_+}{\pi r^2}
	\]
	where $W^u(x,r)$ denotes the ball of radius $r$ for the metric $e^{-2\alpha_u(x)}\omega_Y(x)$ on $W^u(x)$.
  Recall also that we normalized the derivative of $\xi_x^{u}$ at $0$ to be the identity.

	Therefore, by the Lebesgue density theorem combined with \autoref{1}, we have that $\lim_{r\to 0}f(x,r)=1$ for $\nu_*\mu$-a.e. $x$.
	Applying the automorphism and using the coboundary property, we have that
	\[
		f(T_Yx,e^{h}r) = f(x,r) \text{ or equivalently } f(T_Y^{-1}x,e^{-h}r)= f(x,r).
	\]
  Now for any $\ve>0$ there exists $r_\ve>0$ such that the set
  $$Y_\ve:=\left\lbrace x\colon |1-f(x,r)| \leq \ve, \forall r\in (0,r_\ve)\right\rbrace$$
  has positive Lebesgue measure.
  By ergodicity of $T_Y$, $\nu_*\mu$-a.e. $x$ visits $Y_\ve$ for arbitrarily large negative times, so combined with the above equation it follows that $f(x,r) = 1$ for $\nu_*\mu$-a.e. $x$ and any $r>0$.

	Take now a point $y\in \cW^u(x)$ with $v_y\in W^u(x)$ such that $y=\xi_x^u(v_y)$.
	We know that the composition $(\xi_y^{u})^{-1}\circ \xi_x^u$ is an affine map and let $C_{x,y}$ be the derivative of this map for the metrics $e^{-2\alpha_u(p)}\omega_Y(p)$ with $p=x,y$.
	It suffices to show that $C_{x,y}=1$, since then it follows that $\cW^u(p)$ carries a canonical flat metric independent of $p$, and the Radon-Nikodym derivative of $(\nu_*\eta_+)\vert_{\cW^u(p)}$ relative to this metric is identically $1$ (because $f(x,r)=1$).
  By symmetry of the next argument in the two variables, $C_{x,y}\leq 1$ also suffices.

	Since $f(y,r)=1$ for $y$ in a set of full Lebesgue measure on $\cW^u(x)$ (and we pick $y$ in this set) it follows that
	\[
		\frac{\int_{W^u(y,r)}(\xi_y^u)^*\nu_*\eta_+}{\pi r^2} = 1 \text{ for all }r>0.
	\]
	Transporting this identity back to $W^u(x)$ using $(\xi_x^{u})^{-1}\circ\xi^u_{y}$ and using that this map takes a ball of radius $r$ in $W^u(y)$ to a ball of radius $\frac{r}{C_{xy}}$ in $W^u(x)$, it follows that:
	\[
		\frac{\int_{W^u(v_y,r)}(\xi_x^u)^*\nu_*\eta_+}{\pi r^2} = C^2_{xy}
	\]
	where $W^u(v_y,r)$ denotes the ball of radius $r$ at $v_y$ in the metric $e^{-2\alpha_u(x)}\omega_Y(x)\vert_{W^u(x)}$.
	But $W^u(v_y,r)\subset W^u(x,r+|v_y|))$ and $\eta_+\geq 0$ so we have:
	\[
		1 = \frac{\int_{W^u(x,r+|v_y|)}(\xi_x^u)^*\nu_*\eta_+}{\pi (r+|v_y|)^2} \geq \frac{\int_{W^u(v_y,r)}(\xi_x^u)^*\nu_*\eta_+}{\pi (r+|v_y|^2)} = C^2_{xy} \frac{r^2}{(r+|v_y|)^2}
	\]
	Letting $r\to \infty$ the desired conclusion follows.
\end{proof}

\subsection{Boundedness of the dynamical coboundaries}
The next step, which was hinted to earlier, consists in showing that the dynamical coboundary functions $\alpha_u$ and $\alpha_s$ are in fact bounded:
\begin{theorem}\label{bounded}
The coboundary functions $\alpha_u,\alpha_s$ belong to $L^\infty(\nu_*\mu)$.
\end{theorem}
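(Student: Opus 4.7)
The strategy is to argue by contradiction and produce a non-constant entire holomorphic curve $\psi\colon\bC\to Y$ (a Brody curve) satisfying $\psi^*\nu_*\eta_+=0$ and $\psi^*\nu_*\eta_-=0$, contradicting the Dinh--Sibony non-existence theorem (\autoref{entire} below) which forbids such curves on K3 surfaces. By \autoref{sumbeta} together with $\beta\geq 0$, it will suffice to show that $\alpha_u$ is essentially bounded above: the corresponding upper bound for $\alpha_s$ is obtained by applying the same argument to $T_Y^{-1}$ (which swaps the stable and unstable foliations and interchanges the roles of the two eigencurrents, so that $\alpha_s$ plays the role of $\alpha_u$), and the two upper bounds combined with \autoref{sumbeta} then yield the two lower bounds. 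I therefore assume for contradiction that there exists a sequence $x_n\in Y$ with $\alpha_u(x_n)\to+\infty$.

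The entire curve $\psi$ will be extracted as a Brody limit of the canonical unstable parametrizations $\phi_n:=\xi^u_{x_n}\colon\bC\to Y$ of \autoref{sssec:stable_and_unstable_manifolds}. Each $\phi_n$ is non-constant with $|\phi_n'(0)|_{\omega_Y}=1$, so by Brody's reparametrization lemma a subsequence of reparametrizations $\tilde{\phi}_n(z)=\phi_n(z_n+\lambda_n z)$ with $0<\lambda_n\leq 1$ converges locally uniformly to a non-constant entire curve $\psi\colon\bC\to Y$ whose derivative is uniformly bounded in the $\omega_Y$-metric. The first vanishing $\psi^*\nu_*\eta_+=0$ is immediate from \autoref{prop:the_restricted_current_is_flat}: indeed $\phi_n^*\nu_*\eta_+=e^{-2\alpha_u(x_n)}|dw|^2$ is a flat metric whose scale tends to zero, and Brody reparametrization multiplies this by $\lambda_n^2\leq 1$, so the pullbacks still converge to the zero current.

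The main obstacle is the second vanishing $\psi^*\nu_*\eta_-=0$. The plan is to estimate the $\nu_*\eta_-$-mass of the slices $\phi_n(\{|w|\leq R\})$ by exploiting the dynamics: the equivariance $T_Y\circ\xi^u_x=\xi^u_{T_Yx}\circ DT_Y|_{W^u(x)}$ from \autoref{sssec:stable_and_unstable_manifolds} together with $(T_Y^N)^*\nu_*\eta_-=e^{-Nh}\nu_*\eta_-$ lets one transport these disks forward by $T_Y^N$, where the unstable directions are expanded by approximately $e^{Nh/2}$ via the Birkhoff asymptotics for $\rho^u$, while the $\nu_*\eta_-$-mass is simultaneously scaled down by $e^{-Nh}$. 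Combining this expansion/contraction balance with a uniform bound on the $\nu_*\eta_-$-area growth along unstable leaves (inherited from the global cohomological identity $\int_Y\eta_+\wedge\eta_-=1$ via Birkhoff averaging), one shows that $\phi_n^*\nu_*\eta_-$ has vanishing mass on compacts of $\bC$ in the limit, so the Brody limit $\psi$ satisfies $\psi^*\nu_*\eta_-=0$. The delicate balancing underlying this estimate is the technical crux of the proof; with both vanishings in hand, the existence of $\psi$ contradicts \autoref{entire}, completing the argument.
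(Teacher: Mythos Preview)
Your overall architecture matches the paper's proof: reduce via \autoref{sumbeta} and $\beta\geq 0$ to an upper bound for $\alpha_u$, assume it fails along a sequence $x_n$, use the unstable parametrizations $\xi^u_{x_n}$ and Brody reparametrization to extract a nonconstant entire curve $\psi\colon\bC\to Y$, show $\psi^*\nu_*(\eta_++\eta_-)=0$, and contradict \autoref{entire}. The $\eta_+$-vanishing via \autoref{prop:the_restricted_current_is_flat} and $\alpha_u(x_n)\to+\infty$ is handled exactly as in the paper.

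The gap is in your treatment of $\psi^*\nu_*\eta_-=0$. You describe this as ``the technical crux'' and propose a balancing argument combining the equivariance $T_Y\circ\xi^u_x=\xi^u_{T_Yx}\circ DT_Y|_{W^u(x)}$, the scaling $(T_Y^N)^*\eta_-=e^{-Nh}\eta_-$, and a putative uniform $\eta_-$-area bound along unstable leaves ``inherited from $\int_Y\eta_+\wedge\eta_-=1$ via Birkhoff averaging''. This is too vague to be a proof, and the naive balance does not obviously give vanishing: transporting a disk of radius $R$ forward by $T_Y^N$ produces a disk of radius $\approx e^{Nh/2}R$ on another leaf, so $\int_{|w|\leq R}(\xi^u_x)^*\eta_-=e^{Nh}\int_{|v|\lesssim e^{Nh/2}R}(\xi^u_{T_Y^Nx})^*\eta_-$, and an area bound of order $r^2$ on the right gives only a bounded (not small) quantity. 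You have not explained how the global identity $\int\eta_+\wedge\eta_-=1$ yields the required subquadratic growth.

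In the paper this step is not a balancing argument at all: it is a citation. \autoref{rmk:vanishing_pullback} records that $(\xi^u_x)^*\nu_*\eta_-=0$ identically for $\mu$-a.e.\ $x$, by \cite[Thm.~5.3]{Cantat_K3} (projective case) or \cite[Cor.~4.2, Prop.~4.4]{deThelinDinh_DynamicsKahler} (general K\"ahler case). One then simply chooses the $x_n$ from this full-measure set, so that $(\xi^u_{x_n})^*\nu_*\eta_-=0$ for every $n$ before any limit is taken; the Brody limit inherits the vanishing because $\nu_*(\eta_++\eta_-)$ has continuous local potentials (\autoref{potentials}) and the convergence is locally uniform. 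Replace your ``delicate balancing'' paragraph with this citation and the proof goes through.
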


\begin{remark}
	\label{rmk:vanishing_pullback}
	Before the proof of \autoref{bounded}, note that for $\mu$-a.e. $x$ we have that $(\xi^u_x)^*\nu_*\eta_-=0$ by say \cite[Thm. 5.3]{Cantat_K3}, or a combination of \cite[Cor.~4.2, Prop.~4.4]{deThelinDinh_DynamicsKahler} in the non-projective case.
	We will restrict to such $x$ and their image in $Y$ and will eventually construct an entire curve on $Y$ for which the pullback of both $\nu_*\eta_+$ and $\nu_*\eta_-$ vanish.
  This idea is also a key point in the work of Cantat--Dupont \cite{CD}.
\end{remark}

\begin{remark}\label{potentials}
Recall that the map $\nu\colon X\to Y$ contracts the analytic subset $V\subset X$ to the singular points of $Y$, and that by \autoref{prop:kahler} we can write $[\eta_+]+[\eta_-]=\nu^*[\omega_Y]$ for a Ricci-flat orbifold K\"ahler metric $\omega_Y$ on $Y$.
For later use, the following observation will be useful: on $Y$ we can write
\begin{equation}\label{potentials2}
\nu_*(\eta_++\eta_-)=\omega_Y+\ddbar\vp,
\end{equation}
where $\vp\in C^0(Y)$.

To see this, we fix smooth representatives $\alpha_{\pm}$ of $[\eta_{\pm}]$ on $X$, and then we can then write $\eta_{\pm}=\alpha_{\pm}+\ddbar\vp_{\pm}$ where $\vp_{\pm}$ are H\"older continuous functions on $X$, as recalled in \autoref{sssec:the_eigencurents}.
On the other hand we can also write $\nu^*\omega_Y=\alpha_++\alpha_-+\ddbar u$ for some continuous function $u$ on $X$.
It follows that
$$\eta_++\eta_-=\nu^*\omega_Y+\ddbar(\vp_++\vp_--u),$$
and restricting this to any irreducible component $C$ of $V$ (which as we know is contracted to a point by $\nu$) we have
$$0\leq (\eta_++\eta_-)|_C=\ddbar(\vp_++\vp_--u)|_C,$$
so $\vp_++\vp_--u$ is a plurisubharmonic function on the compact curve $C$, hence constant. It follows that we have $\vp_++\vp_--u=\nu^*\vp$ for some continuous function $\vp$ on $Y$ such that $\omega_Y+\ddbar\vp$
is a closed positive current on $Y$ which satisfies $\nu^*(\omega_Y+\ddbar\vp)=\eta_++\eta_-$, and \eqref{potentials2} follows from this.
\end{remark}

\begin{proof}[Proof of \autoref{bounded}]
By \autoref{sumbeta} together with the fact that $\beta\geq 0$, it suffices to show that $\alpha_u\leq C, \alpha_s\leq C$ on a $\nu_*\mu$-full measure set.
We give the argument for $\alpha_u$, the one for $\alpha_s$ being identical.

Suppose that $\alpha_u$ is not bounded on the full $\nu_*\mu$-measure set of points which are not orbifold singularities, for which unstable manifolds exist, and on which \autoref{prop:the_restricted_current_is_flat}, \autoref{prop:cohomologous_to_a_constant}, and \autoref{rmk:vanishing_pullback} hold.
So there is a sequence of such points $x_i$ with $\alpha_u(x_i)\to+\infty$.

Therefore the unstable parametrizations $\xi^u_{x_i}\colon W^u(x_i)\to Y$ satisfy $\xi^u_{x_i}(0)=x_i, D_0\xi_x^u=\id$ and
\[
	(\xi_{x_i}^u)^*\nu_*\eta_+ = e^{-2\alpha_u(x_i)}\omega_Y(x_i)\vert_{W^u(x_i)},	
\]
which goes to zero.
Fix a sequence $R_i\to\infty$.

Suppose first that there is $C$ such that
$$\sup_i\sup_{D_{R_i}(0)}|D\xi^u_{x_i}|\leq C,$$
relative to $\omega_Y$ on $Y$ and the flat metric $\omega_Y(x_i)$ on $W^u(x_i)$.
Then
by Ascoli--Arzel\`a up to passing to a subsequence, the maps $\xi_{x_i}^u|_{D_{R_i}(0)}$ converge locally uniformly to a nonconstant entire curve $\xi:\mathbb{C}\to Y$.
By construction $\alpha_u(x_i)\to +\infty$, the current $\nu_*(\eta_++\eta_-)$ has locally continuous potentials (see \autoref{potentials}), and the convergence $\xi_i\to \xi$ is locally uniform, we can exchange limits to conclude that $\xi^*(\nu_*(\eta_++\eta_-))=0$ {(using \autoref{rmk:vanishing_pullback}).}

If there is no such $C$, we can find points $z_i\in \ov{D_{R_i}(0)}$ such that $|D\xi^u_{x_i}|(z_i)\to\infty$ (up to subsequence).
Working in $D_1(z_i),$ we apply the standard Brody reparametrization argument.
Namely for each $i$ we pick a point $y_i\in D_1(z_i)$ which maximizes
$$\delta_i(z)=\dist(z,\de D_1(z_i))|D\xi^u_{x_i}|(z).$$
Call $r_i=\dist(y_i,\de D_1(z_i)), a_i=|D\xi^u_{x_i}|(y_i),$ so that
$$a_i\geq a_ir_i\geq \delta_i(z_i)=|D\xi^u_{x_i}|(z_i)\to\infty,$$
while for all $z\in D_{r_i/2}(y_i)$ we have
$$a_ir_i\geq \dist(z,\de D_1(z_i))|D\xi^u_{x_i}|(z)\geq \frac{r_i}{2}|D\xi^u_{x_i}|(z),$$
hence $|D\xi^u_{x_i}|\leq 2a_i$ on $D_{r_i/2}(y_i)$.

Let now
\[
	\ti{\xi}_i\colon D_{a_ir_i/2}(0)\to Y,\quad \ti{\xi}_i(z)=\xi^u_{x_i}\left(y_i+\frac{z}{a_i}\right),	
\]
which are defined on bigger and bigger discs and satisfy
$$\sup_{D_{a_ir_i/2}(0)}|D\ti{\xi}_i|\leq 2,\quad |D\ti{\xi}_i|(0)=1,$$
$$\ti{\xi}_i^* \nu_*(\eta_+) = \frac{e^{-2\alpha_u(x_i)}}{a_i^2}\omega_Y(x_i)\vert_{W^u(x_i)},$$
which goes to zero.
Again by Ascoli--Arzel\`a up to passing to a subsequence, the maps $\ti{\xi}_i$ converge locally uniformly to a nonconstant entire curve $\xi\colon \mathbb{C}\to Y$.
Since $a_i\to+\infty$, reasoning as in the case above we again conclude that $\xi^*(\nu_*(\eta_++\eta_-))=0$.

In both cases, the existence of such an entire curve $\xi$ is a contradiction to \autoref{entire} below.
\end{proof}

In the proof above we used the following proposition, which is due to Dinh--Sibony \cite{DS} in general (see \cite[Proposition 3.10]{CD}).
In the $K3$ case we can give a very simple proof:
\begin{proposition}\label{entire}
There is no nonconstant entire curve $\xi:\mathbb{C}\to Y$ such that
$$\xi^*(\nu_*(\eta_++\eta_-))=0.$$
\end{proposition}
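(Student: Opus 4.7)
The plan is to exploit the fact, recorded in \autoref{potentials}, that $\nu_*(\eta_++\eta_-)$ lies in the Kähler class $[\omega_Y]$ with merely continuous global potential, and then use Liouville's theorem on $\mathbb C$.

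First, by \autoref{potentials2} we can write on $Y$ the equality of currents
\[
\nu_*(\eta_++\eta_-)=\omega_Y+\ddbar\vp,\qquad \vp\in C^0(Y).
\]
Assume for contradiction that $\xi\colon\bC\to Y$ is a nonconstant entire curve with $\xi^*\bigl(\nu_*(\eta_++\eta_-)\bigr)=0$. Pulling back the above identity by $\xi$ (which makes sense as an identity of currents, since $\vp\circ\xi$ is continuous on $\bC$ and $\omega_Y$ is an orbifold Kähler form whose local potentials remain plurisubharmonic on the orbifold cover) we obtain
\[
\xi^*\omega_Y=-\ddbar(\vp\circ\xi)
\]
as currents on $\bC$.

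Next, I would observe that $-\vp\circ\xi$ is a continuous function on $\bC$ with $\ddbar(-\vp\circ\xi)=\xi^*\omega_Y\geq 0$, hence subharmonic, and bounded because $\vp$ is continuous on the compact $Y$. By Liouville's theorem for bounded subharmonic functions on $\bC$, the function $\vp\circ\xi$ is constant. Consequently $\xi^*\omega_Y=0$ as a current on $\bC$.

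Finally, I would translate this vanishing into a contradiction with $\xi$ being nonconstant. Since $Y$ has only isolated orbifold (quotient) singularities (by \autoref{contract}) and $\xi$ is nonconstant holomorphic, the preimage $\xi^{-1}(Y_{\mathrm{sing}})\subset\bC$ is a discrete set. On its complement $\xi$ maps into the smooth locus of $Y$, where $\omega_Y$ is a genuine Kähler metric; the vanishing of the smooth semi-positive $(1,1)$-form $\xi^*\omega_Y$ there forces $D\xi\equiv 0$ on this open dense set, so by continuity $\xi$ is constant, contradicting our assumption.

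The only slightly delicate point is the pullback identity near the orbifold singularities of $Y$; this is handled by working upstairs on a local $\Gamma$-cover $\bC^2\to\bC^2/\Gamma$, on which $\omega_Y$ and $\vp$ lift to a smooth Kähler form and a continuous function, and then pulling back by a local lift of $\xi$ (which exists because $\xi^{-1}(Y_{\mathrm{sing}})$ is discrete, one can take the lift on the complement and extend across the discrete set using boundedness of $\vp$). Everything else reduces to the two-line Liouville argument.
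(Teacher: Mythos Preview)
Your argument is correct and takes a genuinely different, more direct route than the paper's. Both proofs start from the decomposition $\nu_*(\eta_++\eta_-)=\omega_Y+\ddbar\vp$ with $\vp\in C^0(Y)$ recorded in \autoref{potentials}. The paper then introduces radial cutoffs $\chi_r$ with $|\ddbar\chi_r|\leq C/r^2$, integrates by parts to bound $\int_{D_r}\xi^*\omega_Y\leq C$ uniformly in $r$, concludes that the entire curve has finite area, extends $\xi$ across $\infty$ to a holomorphic map $\mathbb P^1\to Y$ (via a removable-singularity theorem for finite-area curves), and finishes with the intersection-theoretic contradiction $[\omega_Y]\cdot\xi(\mathbb P^1)>0$. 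You instead observe that $-\vp\circ\xi$ is a bounded function on $\bC$, subharmonic on the complement of the discrete set $\xi^{-1}(Y_{\mathrm{sing}})$ and continuous across it (hence subharmonic everywhere by the standard removability of polar sets), apply Liouville for bounded subharmonic functions on $\bC$, and get $\xi^*\omega_Y=0$ directly, which forces $D\xi\equiv 0$ on the smooth locus. Your approach is shorter and avoids both the finite-area extension theorem and the intersection-theoretic endgame; the paper's route has the advantage of yielding the intermediate geometric statement that any such curve would be rational, and its area-growth estimate is more robust in settings where one only has an inequality rather than $\xi^*(\nu_*(\eta_++\eta_-))=0$. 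The orbifold caveat you raise is real but minor, and the paper's argument implicitly faces the same issue when making sense of $\xi^*\omega_Y$ near the singular points.
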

\begin{proof}
Recall from \autoref{potentials} that on $Y$ we can write
$$\nu_*(\eta_++\eta_-)=\omega_Y+\ddbar\vp,$$
where $\omega_Y$ is a Ricci-flat orbifold K\"ahler metric on $Y$ and $\vp\in C^0(Y)$.

For every $r>0$ choose $\chi_r$ a nonnegative radial cutoff function which equals $1$ on $D_r$, is supported in $D_{2r}$, and such that $|\ddbar\chi_r|\leq \frac{C}{r^2},$ for a constant $C$ independent of $r$. This can be done by letting
$$\chi_r(z)=\eta\left(\frac{|z|}{r}\right),$$
where $\eta$ is a nonnegative cutoff function on $\mathbb{R}_{\geq 0}$ which equals $1$ on $[0,1]$ and vanishes on $[2,\infty)$. We then compute
$$\ddbar\chi_r=i\de\left(\eta'\cdot \frac{z}{2|z|r}d\ov{z}\right)=\left(\frac{\eta''}{4r^2}+\frac{\eta'}{4r|z|}\right)idz\wedge d\ov{z},$$
which is nonzero only for $r\leq |z|\leq 2r$ and so satisfies $|\ddbar\chi_r|\leq \frac{C}{r^2}$ everywhere.
Then for all $r>0$ we have
$$0=\int_{D_{2r}}\chi_r\xi^*(\nu_*(\eta_++\eta_-))=\int_{D_{2r}}\chi_r\xi^*(\omega_Y+\ddbar\vp),$$
and so the area of $\xi(D_r)$ is bounded above by
$$\int_{D_{2r}}\chi_r\xi^*\omega_Y=\int_{D_{2r}}\chi_r \xi^*(-\ddbar\vp)=\int_{D_{2r}}(-\vp\circ\xi)\ddbar\chi_r\leq \frac{Cr^2}{r^2}\leq C,$$
and so the entire curve $\xi$ has finite area, hence it extends to a holomorphic map $\xi\colon \mathbb{P}^1\to Y$, see Moncet \cite[Prop. 3.31]{Moncet}.
If $\xi$ is nonconstant, then by construction the rational curve $C=\xi(\mathbb{P}^1)$ satisfies $C\cdot([\nu_*(\eta_++\eta_-)])=0$, which is impossible since $[\nu_*(\eta_++\eta_-)]=[\omega_Y]$ is K\"ahler.
\end{proof}

\subsection{Completion of the proof of the main results}
In this subsection we complete the proof of \autoref{main3} (and therefore also of \autoref{main}), and we also derive \autoref{coro}.

\begin{proof}[Proof of \autoref{main3}]
From \autoref{bounded} we know that the coboundaries $\alpha_u,\alpha_s$ belong to $L^\infty(\nu_*\mu)$.
Since $\rho^s(x)=\log\|DT_Y|_{W^s(x)}\|_{\omega_Y}$, it follows that for all $N\geq 1$ we have
\[\begin{split}
\log \norm{D_xT^{N}_Y\vert_{W^s(x)}}_{\omega_Y}&=\rho^s(x)+\cdots+\rho^s(T_Y^{N-1}x)\\
&={\alpha_s(T_Y^{N}x)-\alpha_s(x)}-\frac{Nh}{2}\\
&\leq -\frac{Nh}{2}+C,
\end{split}\]
using the coboundary relation \eqref{eqn:rho_cohomologous_to_constant} and the $L^\infty$ bound for $\alpha_s$.
This proves \eqref{exp2}, and a similar argument shows \eqref{exp1}.

The $L^\infty$ bound for $\alpha_u,\alpha_s$ together with $\beta=\alpha_u+\alpha_s-\delta$ (by \autoref{sumbeta}) show that $\beta$ is uniformly bounded.
From \autoref{eqn:omega_0Nh_explicit} and the definition of $\beta$ in \autoref{eqn:definition_beta}, a uniform upper bound on $\beta$ gives a uniform lower bound on the angle between $W^u$ and $W^s$, measured relative to $\omega_Y$.
\end{proof}

Lastly, we prove \autoref{coro}:
\begin{proof}[Proof of \autoref{coro}]
Cantat has proved \cite{Ca2} that if a compact complex surface $X$ admits an automorphism $T$ with positive topological entropy then $X$ is either a torus, K3, Enriques, a blowup of these, or rational. These are all projective, except for non-projective tori and $K3$ and their blowups, so thanks to Cantat-Dupont \cite{CD} (which assume projectivity) we may assume that $X$ is a torus, a $K3$ surface or a blowup of these.

First, if $X$ is a torus then $T$ is induced by an affine transformation of $\mathbb{C}^2$, and therefore $(X,T)$ is trivially a Kummer example.
Second, if $X$ is $K3$ then the result follows from our main \autoref{main}. Lastly, if $X$ is a blowup of a torus or $K3$ surface, say $\pi\colon X\to Y$ is the sequence of blowups, then $T$ induces a bimeromorphic map $T_Y$ of $Y$, which must be a biholomorphism (see e.g. \cite[Prop.~III.4.6]{BHPV}).
Furthermore the topological entropy of $T_Y$ equals the one of $T$ by the same argument as in \cite[Lemma 2.2]{Ka}.
From the relation $T_Y\circ\pi=\pi\circ T$ we deduce that the eigencurrents $\eta_\pm$ for $T$ on $X$ are equal to the pullbacks of the corresponding eigencurrents for $T_Y$ on $Y$, hence the measures of maximal entropy satisfy $\mu=\pi^*\mu_Y$. Since by assumption $\mu$ is absolutely continuous with respect to the Lebesgue measure, we conclude that $\mu_Y$ is also absolutely continuous with respect to the Lebesgue measure. By the previous cases, we see that $(Y,T_Y)$ is a Kummer example, and hence by definition so is $(X,T)$.
\end{proof}


\section{Alternative arguments for the existence of the coboundaries}
	\label{sec:alternative_arguments_for_the_existence_of_the_coboundaries}

In this section we give an alternative argument for the existence of the expansion/contraction coboundaries in \autoref{prop:cohomologous_to_a_constant}.
Instead of using the theories of Pesin and Ledrappier--Young {and the result by De Th\'elin--Dinh,} we rather exploit our specific geometric setup, and in particular the hyperbolic geometry of the space of hermitian metrics on a tangent space with given volume form.
The coboundaries thus constructed have better integrability properties than those given in \autoref{prop:cohomologous_to_a_constant}, which are just measurable.
We hope that the ideas below may prove useful in related problems.

To simplify notation, in this section we work on $X$ as if it was the orbifold $Y$, so that $[\eta_+]+[\eta_-]$ is a K\"ahler class and we will write $\omega_{Nh}$ for the Ricci-flat metrics on $X$ that play the role of $\omega_{Y,Nh}$, so that $\omega_0$ replaces $\omega_Y$. In general one would apply the arguments below to the orbifold $Y$.

\subsection{A simple Lemma}
	\label{ssec:embarrassing_lemma}
The following simple lemma is reminescent of the Gottschalk--Hedlund theorem.
\begin{lemma}[Finding a coboundary]
	\label{lem:finding_a_coboundary}
Let $T\colon (X,\mu)\to (X,\mu)$ be a mixing transformation of a probability measure space.
	Suppose that for $f\in L^2(\mu)$ there exists $C\geq 0$ such that
	\begin{equation}\label{cond}
		\norm{f + T^* f + \cdots +(T^n)^*f}_{L^2(\mu)} \leq C,
	\end{equation}
	for all $n\geq 1.$ Then there exists $h\in L^2(\mu)$ such that $f = h - T^* h$.
\end{lemma}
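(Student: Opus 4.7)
The plan is to set $U := T^*$, the unitary (Koopman) operator on $L^2(\mu)$ induced by the measure-preserving map $T$, and to produce $h$ as a weak subsequential limit of the Birkhoff sums $S_n := f + Uf + \cdots + U^n f$. The proof will use only weak compactness in $L^2$ together with the mixing hypothesis.

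The first step is to observe that $\int f\, d\mu = 0$. Since $\int U^k f\, d\mu = \int f\, d\mu$ by $T$-invariance of $\mu$, we have $\int S_n\, d\mu = (n+1)\int f\, d\mu$. By Cauchy--Schwarz and $\mu(X)=1$, this gives $(n+1)\bigl|\int f\, d\mu\bigr| \leq \norm{S_n}_{L^2} \leq C$ for every $n$, forcing $\int f\, d\mu = 0$.

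Next, by reflexivity of $L^2(\mu)$ together with the uniform bound \eqref{cond}, I extract a weakly convergent subsequence $S_{n_k} \rightharpoonup h$ in $L^2(\mu)$. A direct telescoping identity gives
\begin{equation*}
  S_n - U S_n \;=\; f - U^{n+1} f.
\end{equation*}
On the left, weak continuity of $U$ (it is bounded, in fact unitary) yields $S_{n_k} - U S_{n_k} \rightharpoonup h - Uh$. For the right side, the mixing assumption on $T$ translates to the Hilbert-space statement $U^n g \rightharpoonup (\int g\, d\mu)\cdot \mathbf{1}$ weakly for every $g \in L^2(\mu)$; applied to $f$, whose mean vanishes by the previous step, this gives $U^{n_k+1} f \rightharpoonup 0$. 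Equating the two weak limits produces $h - T^* h = f$, as required.

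The only step that warrants any comment is the passage from the pointwise definition of mixing ($\mu(A \cap T^{-n} B) \to \mu(A)\mu(B)$) to the weak convergence $U^n g \rightharpoonup \int g\, d\mu$: this is standard and follows from $\langle U^n g, \varphi\rangle = \int (g\circ T^n)\overline{\varphi}\, d\mu$ together with density of simple functions in $L^2(\mu)$. I do not expect any serious obstacle; the elegance of the statement is precisely that an $L^2$ bound on the Birkhoff sums, combined with mixing, is enough to upgrade $f$ from a mere element of $\overline{\mathrm{Range}(I-U)}$ (which one gets for free in the ergodic setting) to an element of $\mathrm{Range}(I-U)$ itself.
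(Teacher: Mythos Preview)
Your proof is correct and follows essentially the same approach as the paper's: extract a weak subsequential limit $h$ of the Birkhoff sums and use mixing to see that $U^{n}f \rightharpoonup 0$, then conclude $f = h - T^*h$. The only cosmetic differences are that the paper invokes the von Neumann ergodic theorem rather than Cauchy--Schwarz to get $\int f\,d\mu = 0$, and it computes $T^*h$ directly rather than via your telescoping identity $S_n - US_n = f - U^{n+1}f$; also, the paper only calls $T^*$ an isometry (which is all that is needed for weak continuity), whereas unitarity would require $T$ to be invertible.
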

\begin{proof}
	First note that \autoref{cond}, combined with the von Neumann ergodic theorem, implies that $\int_X fd\mu = 0$.
	By the mixing property of $T$, this implies that the only possible weak limit of $(T^n)^*f$ in $L^2(\mu)$ is $0$.

	Thanks again to the uniform $L^2$ boundedness of the Birkhoff sums of $f$ in \autoref{cond}, there is some weak limit $h\in L^2(\mu)$ of $f + T^* f + \cdots + (T^{n_j})^* f$ along some subsequence $\left\lbrace n_j \right\rbrace$.

	Then using the above remark that the weak limit of $(T^{n_j+1})^* f$ is $0$, it follows that
	\[
		(T^* h) = \lim_{n_j} \left[ \left( f + T^*f \cdots + (T^{n_j+1})^*f \right)  -f \right] = h- f
	\]
	which is the desired conclusion.
  Note that $T^*$ is weakly continuous, since it is an isometry and hence has an adjoint (which can be used to obtain weak continuity).
\end{proof}

Recall that for automorphisms of $K3$ surfaces with positive entropy, the measure of maximal entropy is mixing by \cite[Theorem 6.1]{Cantat_K3}.

\subsection{Yau vs Oseledets curve}
	\label{ssec:yau_vs_oseledets_curve}

\subsubsection{Hyperbolic geometry}
	\label{sssec:hyperbolic_geometry}

Let $V$ be a complex $2$-dimensional vector space, equipped with a non-degenerate complex volume form $\Omega\in \Lambda^2(V^\vee)$.
Let $\bH^3(V)$ be the space of hermitian metrics which induce the same (real) volume on $V$ as $\Omega$; this space is naturally isomorphic to real hyperbolic $3$-space, since it can be described as the quotient $\SL_2(\bC)/\SU(2)$.

Consider now a decomposition $V= W_+ \oplus W_-$ into two complex lines.
This determines a subset $\gamma\subset \bH^3(V)$ of hermitian metrics for which the decomposition of $V$ is orthogonal.
There is a natural nearest point projection $\pi_\gamma:\bH^3(V)\to \gamma$, which we now make explicit.
Note that $\gamma$ is a geodesic for the hyperbolic metric.

\subsubsection{Working in coordinates}
	\label{sssec:working_in_coordinates}
Assume that we have $\bC^2 = \bC \oplus \bC$ as our decomposition.
A hermitian metric, inducing the standard volume form, is given by a $2\times 2$ matrix:
\[
	\begin{bmatrix}
		a & b \\
		\ov{b} & d \\
	\end{bmatrix}
			\text{ with }a,d \in \bR, b\in \bC \text{ and } ad - |b|^2=1.
\]
The metrics on $\gamma$, for which the decomposition is orthogonal, have $b=0$.

By symmetry considerations, we must have
\[
 	\pi_\gamma \left( \begin{bmatrix}
 		a & b \\
 		\ov{b} & a \\
 		\end{bmatrix} \right) =
	\begin{bmatrix}
		1 & 0 \\
		0 & 1 \\
	\end{bmatrix}
\]
since the map $\pi_\gamma$ is equivariant for the action of the matrices in {$\GL_2(\bC)$} which preserve the decomposition $\bC^2 = \bC \oplus \bC$, and the transformation exchanging the two axes is in there.

Finally, using equivariance under the action of scaling the coordinates by $e^{t/2}$ and $e^{-t/2}$ respectively, it follows that:
\[
	\pi_\gamma \left(
	\begin{bmatrix}
		e^{t/2} & 0 \\
		0 & e^{-t/2} \\
	\end{bmatrix}
	\begin{bmatrix}
		a & b \\
		\ov{b} & a \\
	\end{bmatrix}
		\begin{bmatrix}
		e^{t/2} & 0 \\
		0 & e^{-t/2} \\
	\end{bmatrix}
 	\right)
 	= \pi_\gamma \left(
 	\begin{bmatrix}
 		e^t a & b \\
 		\ov{b} & e^{-t} a \\
 	\end{bmatrix}
 	\right)
 	=
 	\begin{bmatrix}
 		e^t & 0 \\
 		0 & e^{-t} \\
 	\end{bmatrix}
\]
which can be rewritten as
\[
	\pi_\gamma \left( \begin{bmatrix}
		a & b \\
		\ov{b} & d \\
	\end{bmatrix}
	 \right)
	 =
	 \begin{bmatrix}
	 	\sqrt{a/d} & 0 \\
	 	0 & \sqrt{d/a} \\
	 \end{bmatrix} 		
\]

\subsubsection{Hyperbolic Distances}
	\label{sssec:hyperbolic_distances}
Given two Hermitian forms $\omega_1,\omega_2$ on $V$ (compatible with the volume) the distance in hyperbolic space between them is defined by
\[
	\frac{\omega_1\wedge\omega_2}{\Omega\wedge \ov{\Omega}} = \frac 12 \left( e^{\dist(\omega_1,\omega_2)} + e^{-\dist(\omega_1,\omega_2)} \right).
\]
Equivalently, one can pick a basis in which $\omega_1$ is standard Euclidean and diagonalize $\omega_2$ using the spectral theorem to define the distance as the logarithm of the largest (relative) singular value.

\subsubsection{Yau and Oseledets curves}
We now apply the discussion above to $V=\mathbf{T}_xX$ with $x$ in the $\mu$-full measure set where the Oseledets theorem gives us the decomposition $\mathbf{T}_xX=W^s(x)\oplus W^u(x)$.
This determines the Oseledets curve $\gamma\subset\bH^3(V)$, a hyperbolic geodesic.
On the other hand, by Yau's Theorem \cite{Ya} we have Ricci-flat metrics $\omega_t$ on $X$ in the class $e^t[\eta_+]+e^{-t}[\eta_-]$, which {together define the Yau curve} in $\bH^3(V)$ (although we will be mostly interested only in the values $t=Nh$, $N\geq 1$).

Recall that the expansion/contraction factors defined in \autoref{eqn:rho_us_definition} are equal to
\[
	\rho^u(x) = \frac{1}{2} \log \left( \frac{a_h}{a_0} \right),
	\quad
	\rho^s(x) = \frac{1}{2} \log \left( \frac{d_h}{d_0} \right)
\]
and so a telescoping sum gives
\[
	\rho^u(x) + \rho^u(Tx) + \cdots + \rho^u(T^{N-1}x) = \frac{1}{2} \log \left( \frac{a_{Nh}}{a_0} \right)
\]
and similarly for $\rho^s(x)$.

We will later need the following:
\begin{proposition}
	\label{prop:integrals_of_rho_us}
	We have the identities:
	\begin{align}
	\label{eqn:rho_integral}
	\begin{split}
		\int_X \rho^u(x)\, d\mu(x) & = \frac{h}{2}\\
		\int_X \rho^s(x)\, d\mu(x) & = -\frac{h}{2}
	\end{split}
	\end{align}	
\end{proposition}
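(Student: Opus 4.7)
The plan is to deduce \autoref{prop:integrals_of_rho_us} from Oseledets' multiplicative ergodic theorem combined with the identification $\lambda^u(\mu) = h/2$ already recorded in \autoref{sssec:absolute_continuity}. The stated goal of \autoref{sec:alternative_arguments_for_the_existence_of_the_coboundaries} is to bypass the Pesin-theoretic construction of the coboundaries $\alpha_u,\alpha_s$, not the Ledrappier--Young identification of the Lyapunov exponent, so we are free to use the latter.

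First I would exploit that $W^u(x)$ is complex one-dimensional: $DT|_{W^u(x)} \to W^u(Tx)$ is a $\mathbb{C}$-linear isomorphism between one-dimensional spaces, so operator norms are exactly multiplicative under composition,
\[
	\|DT^N|_{W^u(x)}\|_{\omega_0} = \prod_{k=0}^{N-1}\|DT|_{W^u(T^k x)}\|_{\omega_0}.
\]
This gives the telescoping identity $\sum_{k=0}^{N-1}\rho^u(T^k x) = \log\|DT^N|_{W^u(x)}\|_{\omega_0}$, and the $T$-invariance of $\mu$ yields, for every $N\geq 1$,
\[
	\int_X \rho^u\, d\mu = \frac{1}{N}\int_X \log\|DT^N|_{W^u(x)}\|_{\omega_0}\, d\mu(x).
\]

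By the Oseledets theorem together with \autoref{sssec:absolute_continuity}, the integrand $\tfrac{1}{N}\log\|DT^N|_{W^u(x)}\|_{\omega_0}$ converges pointwise $\mu$-a.e. to $h/2$ as $N\to\infty$. Since $\|DT^{\pm 1}\|_{C^0(X)}$ are finite by compactness of $X$, this sequence is uniformly bounded, so the dominated convergence theorem yields $\int_X \rho^u\, d\mu = h/2$. The identical argument applied to $W^s$ and its Lyapunov exponent $-h/2$ gives $\int_X \rho^s\, d\mu = -h/2$. (Alternatively, the second identity follows from the first by combining \autoref{eqn:the_sum_is_a_coboundary}, which asserts $\rho^u + \rho^s = T^*\beta - \beta$, with $T$-invariance of $\mu$, provided one can justify $\beta \in L^1(\mu)$ by a truncation; the parallel Oseledets argument above circumvents this.)

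No significant technical obstacle arises: the proof is essentially a repackaging of Oseledets plus the ergodic theorem. The two items to verify are the uniform boundedness of $\rho^{u/s}$ (immediate from compactness of $X$ and smoothness of $DT$ and $\omega_0$) and the availability of $\lambda^{u/s}(\mu)=\pm h/2$, which was recorded in the preliminaries.
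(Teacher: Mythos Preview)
Your proof is correct and follows essentially the same approach as the paper: both use that $W^u$ is a complex line (so the cocycle is abelian), invoke the identification $\lambda=h/2$ from \autoref{sssec:absolute_continuity}, and reduce to the formula $\lambda=\int_X\log\norm{DT|_{W^u(x)}}\,d\mu$. The paper states this last formula as a known fact and remarks that changing the metric alters the integrand by a coboundary (hence leaves the integral unchanged), whereas you unpack the same formula explicitly via telescoping, Oseledets, and dominated convergence; the content is the same.
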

\begin{proof}
  Recall that the Lyapunov exponent of $\mu$ is $h/2$, see \autoref{sssec:absolute_continuity}.

	Once the stable and unstable bundles are given, and since in our case they are line bundles, the exponent can be computed from the formula:
	\[
		\lambda = \int_X \log \norm{DT|_{W^u(x)}} d\mu(x)
	\]
	where we compute the norm of $DT\colon W^u(x)\to W^u(Tx)$ for one fixed ambient metric.
	Note that if the metric is changed, then the quantity $\log \norm{DT|_{W^u(x)}}$ changes by a coboundary, i.e. $\alpha(x)-\alpha(Tx)$ where $e^{\alpha(x)}$ is the constant of proportionality between the old and the new metric, when restricted to $W^u(x)$.
	In particular, the integral is independent of the metric.

	Since by definition $\rho^u(x)$ is the pointwise norm of $DT$ on the unstable for the Ricci-flat metric, the claimed identity follows.
\end{proof}

By the discussion in \ref{sssec:hyperbolic_geometry} there are also the ``Oseledets-projected'' metrics $\theta_0, \theta_{Nh}$ which correspond to the projection of $\omega_0,\omega_{Nh}$ to the geodesic $\gamma$ determined by the stable/unstable decomposition.
In the fixed basis $\{e_1,e_2\}$ as in \ref{sssec:computation_in_a_basis}, the metrics are:
\begin{align}
	\label{eqn:theta_0Nh_Explicit}
	\theta_0 =
	\begin{bmatrix}
		\sqrt{\frac{a_0}{d_0}} & 0 \\
		0 & \sqrt{\frac{d_0}{a_0}} \\
	\end{bmatrix},
	\quad
	\theta_{Nh} =
	\begin{bmatrix}
		\sqrt{\frac{a_{Nh}}{d_{Nh}}} & 0 \\
		0 & \sqrt{\frac{d_{Nh}}{a_{Nh}}} \\
	\end{bmatrix}
\end{align}
The distance-decreasing property of projections in the hyperbolic metric on $\bH^3(V)$, {together with the formula for the hyperbolic distance in \autoref{sssec:hyperbolic_distances}}, gives
\begin{align}
	\label{eqn:lambda_controls_theta}
	\dist(\theta_0,\theta_{Nh}) \leq \dist(\omega_0,\omega_{Nh}) = 2 \lambda(x,N).
\end{align}

The distance $\dist(\theta_0, \theta_{Nh})$ is computed explicitly as:
\begin{equation}\label{dista}
\begin{split}
\dist(\theta_0,\theta_{Nh}) &= \frac{1}{2} \left| \log \left( \frac{a_{Nh}d_0}{a_0 d_{Nh}} \right) \right| \\
&=\left| \rho^u(x) + \cdots + \rho^u(T^{N-1}x)  - \rho^s(x) - \cdots - \rho^s(T^{N-1}x)\right|\\
& = |S_N \rho^u(x) - S_N \rho^s(x)|
\end{split}
\end{equation}
at $\mu$-a.e. point, where $S_N f $ denotes the Birkhoff sum of the function $f$,
\[
	S_Nf(x) := f(x) + \cdots + f(T^{N-1}x).
\]

Combining this identity with the previous inequality gives
\begin{align}
	\label{eqn:Birkhoff_sum_bound}
	|S_N \rho^u(x) - S_N \rho^s(x)| \leq 2 \lambda(x,N)
\end{align}
The following observation is going to be crucial:

\begin{proposition}\label{integr}Suppose that $\mu$ is absolutely continuous with respect to the Lebesgue measure, so that $\mu=\frac{\dVol}{|S|}\bigg|_S.$
Then the Birkhoff sums of $\rho^u-\rho^s-h$ satisfy
\begin{equation}\label{exp+}
\int_X e^{S_N (\rho^u-\rho^s - h)}d\mu\leq \frac{2}{|S|},\text{ for all }N\geq 1.
\end{equation}

\end{proposition}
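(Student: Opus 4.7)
The plan is to dominate the integrand $e^{S_N(\rho^u-\rho^s-h)}$ pointwise by the density $\omega_0\wedge\omega_{Nh}/\dVol$ and then apply the cohomological identity $\int_X\omega_0\wedge\omega_{Nh}=e^{Nh}+e^{-Nh}$, which follows from $[\omega_0]=[\eta_+]+[\eta_-]$, $[\omega_{Nh}]=e^{Nh}[\eta_+]+e^{-Nh}[\eta_-]$, $[\eta_\pm]^2=0$, and the normalization $\int[\eta_+]\wedge[\eta_-]=1$. Once the pointwise domination is established, the total mass is already essentially $e^{Nh}$, which cancels the $e^{-Nh}$ factor and leaves a bound independent of $N$.

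Set $R_N(x):=S_N\rho^u(x)-S_N\rho^s(x)$, so that $S_N(\rho^u-\rho^s-h)=R_N-Nh$. From the telescoping formulas in \autoref{sssec:computation_in_a_basis} we have $e^{R_N}=\sqrt{a_{Nh}d_0/(a_0 d_{Nh})}$, and a direct computation in the (common) diagonal basis of the Oseledets-projected metrics $\theta_0,\theta_{Nh}$ from \autoref{eqn:theta_0Nh_Explicit} gives
\[ \frac{\theta_0\wedge\theta_{Nh}}{\dVol}=e^{R_N}+e^{-R_N}\geq e^{R_N}. \]

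The one substantive step is the pointwise inequality
\[ \frac{\theta_0\wedge\theta_{Nh}}{\dVol}\leq\frac{\omega_0\wedge\omega_{Nh}}{\dVol}, \]
which is the distance-decreasing property of the nearest-point projection $\pi_\gamma\colon\bH^3(V)\to\gamma$ from \autoref{sssec:hyperbolic_geometry}, combined with monotonicity of the wedge product in the hyperbolic distance (\autoref{sssec:hyperbolic_distances}); this is exactly parallel to \autoref{eqn:lambda_controls_theta}. For a direct verification bypassing hyperbolic geometry, set $u:=a_0d_0\geq 1$, $v:=a_{Nh}d_{Nh}\geq 1$. Then $|2\operatorname{Re}(b_0\overline{b_{Nh}})|\leq 2\sqrt{(u-1)(v-1)}\leq 2(\sqrt{uv}-1)$ using $|b_0|^2=u-1$, $|b_{Nh}|^2=v-1$ and $(u-1)(v-1)\leq(\sqrt{uv}-1)^2$ (which reduces to AM--GM). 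Combined with $a_0d_{Nh}+a_{Nh}d_0\geq 2\sqrt{uv}$, one obtains the stated inequality after a short algebraic manipulation.

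With the pointwise estimate $e^{R_N}\leq\omega_0\wedge\omega_{Nh}/\dVol$ in hand, integration against $\mu=\dVol|_S/|S|$ immediately yields the result:
\[ \int_X e^{S_N(\rho^u-\rho^s-h)}\,d\mu=\frac{e^{-Nh}}{|S|}\int_S e^{R_N}\,\dVol\leq\frac{e^{-Nh}}{|S|}\int_X\omega_0\wedge\omega_{Nh}=\frac{1+e^{-2Nh}}{|S|}\leq\frac{2}{|S|}. \]
I expect the pointwise inequality to be the only step requiring thought; both the telescoping identity and the cohomological computation are routine, and no appeal to Pesin theory, Ledrappier--Young, or De Th\'elin--Dinh is needed in this section --- only Yau's theorem (already used to produce $\omega_0$ and $\omega_{Nh}$) and the elementary hyperbolic geometry of $\bH^3(V)$.
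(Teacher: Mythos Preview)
Your proof is correct and follows essentially the same route as the paper: both establish the pointwise bound $e^{R_N}\leq \omega_0\wedge\omega_{Nh}/\dVol$ via the distance-decreasing property of $\pi_\gamma$ (the paper passes through $e^{2\lambda(x,N)}$ using \autoref{eqn:Birkhoff_sum_bound}, you pass through $\theta_0\wedge\theta_{Nh}/\dVol$) and then integrate against the cohomological identity $\int_X\omega_0\wedge\omega_{Nh}=e^{Nh}+e^{-Nh}$. Your direct AM--GM verification of the wedge inequality is a pleasant self-contained alternative to invoking the hyperbolic geometry of $\bH^3(V)$.
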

\begin{proof}
We have
\[\begin{split}
2e^{Nh}&\geq e^{Nh} + e^{-Nh} = \int_X \omega_0 \wedge \omega_{Nh}\\
&=\int_X (e^{2\lambda(x,N)}+e^{-2\lambda(x,N)})\dVol\geq \int_X e^{2\lambda(x,N)}\dVol,
\end{split}\]
i.e.
\begin{equation}\label{expl}
\int_X e^{2\lambda(x,N)-Nh}\dVol\leq 2,
\end{equation}
but from \autoref{eqn:Birkhoff_sum_bound} we also have
$S_N \rho^u(x) - S_N \rho^s(x) \leq 2 \lambda(x,N)$,
and so
\[\int_X e^{S_N (\rho^u-\rho^s - h)}\dVol=\int_X e^{S_N \rho^u - S_N \rho^s-Nh}\dVol\leq 2.\]
This finally gives:
\[\int_X e^{S_N (\rho^u-\rho^s - h)}d\mu\leq \frac{2}{|S|}.\]
\end{proof}

Next, observe that both $\rho^u$ and $\rho^s$ are in $L^\infty$, with a uniform bound which only depends on the transformation $T$ and the Ricci-flat metric $\omega_0$. In particular the function $\rho^u-\rho^s-h$ is also in $L^\infty$, and thanks to \autoref{prop:integrals_of_rho_us} it satisfies
\begin{equation}\label{exp0}
\int_X(\rho^u-\rho^s-h)d\mu =0.
\end{equation}

\subsection{Exponential integrability of Birkhoff sums}
Our goal now is to use \autoref{exp+} and \autoref{exp0} to prove an exponential integrability bound for $|S_N(\rho^u-\rho^s-h)|$. As it turns out this is an essentially formal consequence of \autoref{exp+} and \autoref{exp0}, as we now show.

\begin{proposition}[Exponential integrability of Birkhoff sums]
	\label{prop:exponential_integrability_of_birkhoff_sums}
Let $T\colon (X,\mu)\to (X,\mu)$ be an invertible transformation of a probability measure space.
	Suppose that $f\in L^1(X,\mu)$ with $\int_X f\, d\mu=0$ is such that
	\begin{equation}\label{assn}
		\int_X e^{S_Nf}d\mu \leq C
	\end{equation}
	for a uniform constant $C$ and all $N\geq 1$. Then
  for every $0<\gamma<\frac{1}{6}$ there is $C'=C'(C,\gamma)$, {which does not depend on $f$,} such that
  \begin{equation}
    \label{concl}
  \int_X e^{\gamma|S_Nf|}d\mu\leq C', \text{ for all }N\geq 1.
  \end{equation}
\end{proposition}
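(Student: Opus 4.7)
The plan is to decompose $e^{\gamma|S_N f|}\le e^{\gamma S_N f}+e^{-\gamma S_N f}$ and bound each integral separately. The first integral is handled by Jensen's inequality: for $\gamma\in(0,1)$, concavity of $x\mapsto x^\gamma$ on $\mathbb{R}_{>0}$ gives
\[
\int_X e^{\gamma S_N f}\,d\mu \;=\; \int_X (e^{S_N f})^\gamma\,d\mu \;\le\; \left(\int_X e^{S_N f}\,d\mu\right)^{\gamma} \;\le\; C^\gamma,
\]
which is uniform in $N$. This already disposes of the positive exponent entirely.

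For the more delicate $\int_X e^{-\gamma S_N f}\,d\mu$, the idea is to exploit the cocycle identity $S_{N+M}f = S_N f + (S_M f)\circ T^N$, rearranged as $-S_N f = (S_M f)\circ T^N - S_{N+M} f$. For H\"older conjugate exponents $p,p'$ with $1/p+1/p'=1$ and $p\gamma<1$, combined with $T$-invariance of $\mu$,
\[
\int_X e^{-\gamma S_N f}\,d\mu \;\le\; \left(\int_X e^{p\gamma S_M f}\,d\mu\right)^{1/p}\!\!\left(\int_X e^{-p'\gamma S_{N+M}f}\,d\mu\right)^{1/p'}\!\!\le\; C^\gamma\left(\int_X e^{-p'\gamma S_{N+M}f}\,d\mu\right)^{1/p'}.
\]
This is self-similar: the original integral is controlled by another of the same form at enlarged parameters $(p'\gamma,\,N+M)$. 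Iterating, one collects a prefactor $C^\gamma$ (taken to geometrically decaying powers) at each step, while the exponent grows as $\beta_k=(p')^k\gamma$ and the compensating power shrinks as $1/(p')^k$. The threshold $\gamma<1/6$ corresponds to taking $p\approx 6$, $p'\approx 6/5$ in the first H\"older step, so that $p\gamma<1$ on the initial round; smaller choices of $\gamma$ in turn allow more iterations.

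The main obstacle is closing the iteration. After $k$ rounds one is left with a residual factor $\bigl(\int_X e^{-\beta_k S_{N_k}f}\,d\mu\bigr)^{1/(p')^k}$ that must remain bounded as $\beta_k$ approaches (and eventually exceeds) $1$, where Jensen can no longer be invoked directly. The invertibility of $T$ should enter here, allowing free use of $T^{\pm 1}$-invariance to shift Birkhoff sums, and the residual can be absorbed by coupling the hypothesis with the ergodic theorem, which forces $S_{N_k}f/N_k\to 0$ and thereby controls $(\int_X e^{-\beta_k S_{N_k}f}\,d\mu)^{1/N_k}$. A careful bookkeeping of the geometric series of prefactors then yields the constant $C'=C'(C,\gamma)$, uniform in $N$ and $f$. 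An alternative route, which I would also explore, is to leverage \autoref{lem:finding_a_coboundary}: the hypothesis should force $f$ to be a dynamical coboundary $f=h-h\circ T$ with $h$ enjoying some exponential integrability, in which case $|S_N f|\le|h|+|h\circ T^N|$ reduces the claim to exponential integrability of $|h|$ via Cauchy--Schwarz and $T$-invariance.
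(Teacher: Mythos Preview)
Your Jensen bound for the positive term $\int e^{\gamma S_N f}\,d\mu\le C^\gamma$ is correct and clean. The problem is entirely in the negative term, and there your iteration does not close. Each H\"older step requires $p\beta_{k-1}<1$ in order to invoke Jensen on the factor $\int e^{p\beta_{k-1}S_M f}\,d\mu$; since $\beta_k=(p')^k\gamma$ is increasing, this constraint fails after finitely many steps (for $\gamma$ just below $1/6$ and $p=6$, already at the second step, since $p\,p'\gamma=36\gamma/5$ can exceed $1$). You are then left with a residual $\bigl(\int e^{-\beta_k S_{N_k}f}\,d\mu\bigr)^{1/(p')^k}$ with $\beta_k$ near or above $1$, for which the hypothesis gives no control whatsoever. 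Your proposed fix via the ergodic theorem does not work: the proposition assumes neither ergodicity nor mixing, so $S_{N_k}f/N_k$ need not tend to $0$; the outer exponent is $1/(p')^k$, not $1/N_k$, and these are unrelated; and in any case almost-everywhere convergence of $S_Kf/K$ gives no bound on $\int e^{-\beta S_K f}\,d\mu$. Most fatally, the conclusion demands $C'=C'(C,\gamma)$ independent of $f$, so any argument that invokes rates of convergence for the particular $f$ at hand is ruled out.

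The coboundary route is circular. \autoref{lem:finding_a_coboundary} requires mixing (not assumed) and a uniform $L^2$ bound on $S_Nf$, which is essentially the conclusion you are after; and even then it yields only $h\in L^2$, with no exponential integrability and no uniformity in $f$.

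The paper's argument is of a completely different nature. It proves directly the tail estimate $\mu(\{f^-\ge L\})\le C'e^{-L/6}$ with $C'=C'(C)$, by a combinatorial construction: from the level set $B_L=\{f^-\ge L\}$ one removes the orbit segments (of length $\sim e^{L/6}$ in both time directions) that pass through the ``bad'' sets $P_{j,L}=\{S_j^+f\ge (L-1)/2\}$ for $j\le e^{L/6}$; Chebyshev bounds $\mu(P_{j,L})\le Ce^{-L/2}$, so the removed mass is $O(e^{L/3}\cdot e^{-L/2})=O(e^{-L/6})$. On the remaining set $C_L$, a counting argument shows that the number of visits to $C_L$ along an orbit of length $e^{L/6}$ is dominated by $S^-_{e^{L/6}}f$, and since $\int S_K^-f\,d\mu=\int S_K^+f\,d\mu\le C$ (using $\int f=0$ and $x\le e^x$), one gets $\mu(C_L)\le Ce^{-L/6}$. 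Applying this with $(f,T)$ replaced by $(S_Nf,T^N)$ gives the tail of $S_N^-f$ uniformly in $N$. The threshold $\tfrac16$ thus comes from balancing $e^{L/6}\cdot e^{L/6}$ sets of measure $e^{-L/2}$ against the final $e^{-L/6}$, not from any H\"older conjugacy.
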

\begin{proof}
	Decompose $f=f^+ - f^-$ into its positive and negative  parts, and for Birkhoff sums denote by $S_Nf = S_N^+f - S_N^-f$ the decomposition into positive and negative parts.
Suppose we show that there is a constant $C'$ that depends only on $C$ such that the negative part $f^-$ satisfies the bound
		\begin{equation}\label{topr}
			\mu\left( \left\lbrace x:f^-(x)\geq L \right\rbrace \right) \leq C'e^{-\frac{L}{6}}
		\end{equation}
		for all $L\geq 0$.

Then this can be applied to the function $S_Nf$ with the transformation $T^N$, which would thus give us
	\[
			\mu\left( \left\lbrace x: S_N^-f(x)\geq L \right\rbrace \right) \leq C'e^{-\frac{L}{6}}
		\]
		for all $L\geq 0$ and all $N\geq 1$.
    On the other hand \autoref{assn} directly implies
\begin{equation}\label{trivial}
\int_X e^{S^+_Nf}d\mu \leq C,
\end{equation}
which together with the Chebyshev-Markov inequality implies that
		\begin{equation}\label{cheb}
			\mu \left( \left\lbrace x \colon S_N^+f(x)\geq L \right\rbrace \right) \leq Ce^{-L}
		\end{equation}
for all $L\geq 0$ and all $N\geq 1$, and so \autoref{concl} follows from these bounds together with the elementary formula
$$\int_X e^{\gamma |u|}d\mu=\gamma\int_0^\infty \mu(\{|u|\geq t\})e^{\gamma t} dt.$$

So it suffices to prove \autoref{topr}. Define the set of interest as
	\[
		B_L := \left\lbrace x \colon f^-(x)\geq L \right\rbrace
	\]
	Consider now the set where the positive parts of the Birkhoff sums are large (but on a smaller scale):
	\[
		P_{j,L} := \left\lbrace x \colon S_j^+f(x)\geq \frac{1}{2}(L-1) \right\rbrace
	\]
	which by \autoref{cheb} satisfy $\mu(P_{j,L})\leq C'e^{-\frac{L}{2}}$.
	Set
	\[
		C_L := B_L \setminus \bigcup_{i=-e^{\frac{L}{6}}}^{e^{\frac{L}{6}}} \bigcup_{j=1}^{e^{\frac{L}{6}}} T^i (P_{j,L})
	\]
	which satisfies the size bound:
	\[
		\mu(C_L) \geq \mu(B_L) - 3e^{\frac{L}{3}}\cdot C\cdot e^{-\frac{L}{2}}=\mu(B_L)-C'e^{-\frac{L}{6}}.
	\]
	Any point $x\in C_L$ has the property that $f^-(x)\geq L$ and for any other point in its orbit $T^ix$ (with $i=-e^{\frac{L}{6}}\ldots e^{\frac{L}{6}}$) any Birkhoff sum (with $j=1\ldots e^{\frac{L}{6}}$) satisfies $S_j^+f(T^ix)\leq \frac{L-1}{2}$.

	Consider now any point $y$ in the support of the function
	\[
		g_L:=\id_{C_L} + \id_{T^{-1}C_L} + \cdots + \id_{T^{-e^{\frac{L}{6}}}C_L}
	\]
	Then $g_L(y)$ is the number of visits of $y$ to $C_L$ in the times $1\ldots e^{\frac{L}{6}}$.
	The Birkhoff sum $S_{e^{\frac{L}{6}}}f(y)$ can be divided into at most $g_L(y)+1$ intervals where the positive part is bounded above by $\frac{L-1}{2}$ by the construction of $C_L$, and the $g_L(y)$ points where $f^-\geq L$.
	This implies that
	\[
		S_{e^{\frac{L}{6}}}^-f(y) \geq g_L(y)\left(L-2\frac{L-1}{2}\right)=g_L(y)
	\]
	for all $y$ in the support of $g_L$.
	Integrating over all $y$ gives:
	\begin{equation}\label{numbered}
		\int_{\mathrm{Spt}(g_L)} S_{e^{\frac{L}{6}}}^-f(y) d\mu(y) \geq e^{\frac{L}{6}}\cdot \mu(C_L)
	\end{equation}
    On the other hand, the condition that $\int_X f d\mu = 0$ implies $\int_X S_N^-f d\mu=\int_X S_N^+f d\mu$.
    Using \autoref{trivial}, together with the trivial inequality $x\leq e^x$ for $x\geq 0$, we immediately get $\int_X S_N^+f d\mu \leq C$ for all $N\geq 1$.

    Together with \autoref{numbered}, this then implies $	\mu(C_L)\leq C'e^{-\frac{L}{6}}$.
	Using now the lower bound on $\mu(C_L)$ in terms of $B_L$ gives the desired $\mu(B_L)\leq C''e^{-\frac{L}{6}}$.
\end{proof}

Thanks to \autoref{exp0} and \autoref{exp+}, \autoref{prop:exponential_integrability_of_birkhoff_sums} applies to $f=\rho^u-\rho^s-h$, and so we conclude that for every $0<\gamma<\frac{1}{6}$ there is $C'>0$ such that
\begin{equation}\label{exp-}
\int_X e^{\gamma|S_N(\rho^u-\rho^s-h)|}d\mu \leq C',
\end{equation}
for all $N\geq 1$.
{In particular, thanks to the elementary inequality $\frac{\gamma^2}{2}|S_N(\rho^u-\rho^s-h)|^2\leq e^{\gamma|S_N(\rho^u-\rho^s-h)|},$ we see that} the hypotheses of \autoref{lem:finding_a_coboundary} are satisfied by $f=\rho^u-\rho^s - h$.
Indeed, the left hand side in \autoref{lem:finding_a_coboundary} is just $\norm{S_Nf}_{L^2(\mu)}^{1/2}$.
We thus obtain
\begin{corollary}
There is a function $\alpha\in L^2(\mu)$ such that
$$\rho^u-\rho^s-h=T^*\alpha-\alpha.$$
Hence, combining this with \autoref{eqn:the_sum_is_a_coboundary}, the functions
$$\alpha_u:=\frac{\alpha+\beta}{2},\quad \alpha_s:=\frac{\beta-\alpha}{2},$$
satisfy \autoref{eqn:rho_cohomologous_to_constant}.
\end{corollary}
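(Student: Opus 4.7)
The plan is to verify the hypotheses of \autoref{lem:finding_a_coboundary} for $f := \rho^u - \rho^s - h$, deduce the existence of the coboundary function $\alpha$, and then algebraically combine with \autoref{eqn:the_sum_is_a_coboundary} to extract $\alpha_u, \alpha_s$ satisfying \autoref{eqn:rho_cohomologous_to_constant}. All the analytic input is already available: \autoref{exp0} gives $\int_X f\, d\mu = 0$, \autoref{exp-} gives the uniform exponential integrability $\int_X e^{\gamma|S_Nf|}\, d\mu \leq C'$ for all $N\geq 1$ and some fixed $\gamma \in (0,1/6)$, and the measure of maximal entropy $\mu$ is mixing by \cite[Theorem 6.1]{Cantat_K3}.

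First, I would upgrade the exponential bound in \autoref{exp-} to an $L^2$ bound. The elementary inequality $\tfrac{\gamma^2}{2}x^2 \leq e^{\gamma|x|}$ applied pointwise to $x = S_N f$, followed by integration against $\mu$, yields
\[
\frac{\gamma^2}{2}\,\|S_N f\|_{L^2(\mu)}^2 \;\leq\; \int_X e^{\gamma|S_N f|}\,d\mu \;\leq\; C',
\]
so $\|S_N f\|_{L^2(\mu)}$ is uniformly bounded in $N$. Since $f \in L^\infty(\mu) \subset L^2(\mu)$ (both $\rho^u$ and $\rho^s$ are bounded by smoothness of $\omega_Y$ and $T_Y$), \autoref{lem:finding_a_coboundary} applies and produces some $\alpha \in L^2(\mu)$ satisfying
\[
\rho^u - \rho^s - h \;=\; T^*\alpha - \alpha.
\]

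Second, I would combine this identity with \autoref{eqn:the_sum_is_a_coboundary}, which says $\rho^u + \rho^s = T^*\beta - \beta$. Adding and subtracting, setting
\[
\alpha_u := \tfrac{1}{2}(\alpha + \beta),\qquad \alpha_s := \tfrac{1}{2}(\beta - \alpha),
\]
one gets $\rho^u - \tfrac{h}{2} = T^*\alpha_u - \alpha_u$ and $\rho^s + \tfrac{h}{2} = T^*\alpha_s - \alpha_s$, which are precisely the equations of \autoref{eqn:rho_cohomologous_to_constant}. Note that $\alpha_u + \alpha_s = \beta$, consistent with \autoref{sumbeta} (with constant $\delta = 0$ in this construction).

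There is no real obstacle in this corollary: the delicate work has already been done in establishing the uniform exponential integrability \autoref{exp+} of Birkhoff sums of $\rho^u - \rho^s - h$ (which came from the Yau/Oseledets comparison in hyperbolic 3-space together with the Lebesgue class assumption), in proving the Gottschalk--Hedlund style lemma using mixing (\autoref{lem:finding_a_coboundary}), and in upgrading the exponential bound to an $L^2$ bound on Birkhoff sums (\autoref{prop:exponential_integrability_of_birkhoff_sums}). The only point deserving care is the $L^2$-to-coboundary passage: the weak convergence argument in \autoref{lem:finding_a_coboundary} uses mixing crucially to force the weak limit of $(T^{n_j+1})^*f$ to be zero, so that the telescoping identifies $T^*h = h - f$. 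The resulting $\alpha$ lies in $L^2(\mu)$, which is weaker than the $L^\infty$ conclusion of \autoref{bounded} but suffices for the purely algebraic assembly of $\alpha_u, \alpha_s$ above.
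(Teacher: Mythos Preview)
Your proposal is correct and follows essentially the same route as the paper: the paper also passes from \autoref{exp-} to a uniform $L^2$ bound on $S_Nf$ via the elementary inequality $\tfrac{\gamma^2}{2}x^2\leq e^{\gamma|x|}$, invokes \autoref{lem:finding_a_coboundary} (using mixing of $\mu$) to produce $\alpha\in L^2(\mu)$, and then leaves the algebraic combination with \autoref{eqn:the_sum_is_a_coboundary} implicit in the corollary statement. Your explicit verification of the add/subtract step and the remark that $\alpha_u+\alpha_s=\beta$ (so $\delta=0$ here) are fine and add clarity.
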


\begin{remark}
By working in a suitable Orlicz space instead of $L^2(\mu)$, and applying the analog of \autoref{lem:finding_a_coboundary}, it is not hard to see that the coboundary $\alpha$ is in fact exponentially integrable, in the sense that
$e^{\gamma|\alpha|}\in L^1(\mu)$ for some $\gamma>0$. With more work, one can deduce the same integrability for $\beta$, and hence for $\alpha_u$ and $\alpha_s$. However, to show that $\alpha_u$ and $\alpha_s$ are in fact bounded, still requires the arguments that we used in \autoref{gen}.
\end{remark}


\bibliographystyle{sfilip_bibstyle}
\bibliography{K3_rigidity}

\end{document}